\theoremstyle{plain}
\newtheorem{thm}{Theorem}[section]
\newtheorem{cor}[thm]{Corollary}
\newtheorem{lem}[thm]{Lemma}
\newtheorem{prop}[thm]{Proposition}
\theoremstyle{definition}
\theoremstyle{remark}
\newtheorem{remark}[thm]{Remark}
\DeclareMathOperator{\supp}{supp}
\newcommand{\eps}{\varepsilon}
\def\d{{\rm d}}
\newcommand{\esssup}{\text{ess\,sup}}
\begin{document}

\title[Unique ergodicity...]{Unique Ergodicity for Noninvertible Function Systems on an Interval}

\author[S.C. Hille]{Sander C. Hille}
\address{Mathematical Institute, Leiden University, P.O. Box 9512, 2300 RA Leiden, The Netherlands (SH)}
\email{shille@math.leidenuniv.nl}
\author[H. Oppelmayer]{Hanna Oppelmayer}
\address{Institut f\"ur Mathematik, Universit\"at Innsbruck, Technikerstrasse 13, A--6020 Insbruck, Austria (HO)}
\email{Hanna.Oppelmayer@uibk.ac.at}
\author[T. Szarek]{Tomasz Szarek}
\address{Institut of Mathematics Polish Academy of Sciences, Abrahama 18, Sopot, Poland (TS)}
\email{tszarek@impan.pl}

\thanks{T.S. was supported by the Polish NCN Grant 2022/45/B/ST1/00135.}

\date\today

\maketitle

\begin{abstract}
We study random dynamical systems of certain continuous functions on the unit interval. We use bounded variation to provide sufficient conditions for unique ergodicity of these systems. Several classes of examples are provided.
\end{abstract}

\section{Introduction}

Unique ergodicity of dynamical systems is a classical property, which has been intensively studied in various settings for many years. Already in the 1950s  Oxtoby \cite{O}, Kakutani \cite{Ka} and many others obtained deep results about uniquely ergodic dynamical systems.

Recently there has been rapid progress in studying ergodicity for random dynamical systems. It appeared that a combination of probabilistic and dynamical systems tools 
brings new insights into the complex behaviour of stochastic dynamical systems. 

Unique ergodicity of random homeomorphisms on intervals has been almost completely examined by Alseda and Misiurewicz \cite{AM}, Czudek and Szarek \cite{CS}, Deroin, Kleptsyn and Navas \cite{DKN},
Gharaei--Homburg \cite{GH}, Bara\'nski--\'Spiewak \cite{BS} and many others. In turn,
unique ergodicity for random homomorphisms on the circle was studied, e.g., by Malicet \cite{M}, Navas \cite{N}. 

It seems that  the case of non-invertible random maps has not been yet completely analysed. Some results  were obtained for certain classes of non-invertible iterated function systems (see \cite{BOS, HHOS, HKRVZ, Kl}).

In this note, we aim at formulating a new and quite general sufficient condition for unique ergodicity of non--invertible random maps. It makes use of variation of random composition.

Let $\mu$ be a Borel probability measure on the space $C([0, 1])$ of all continuous functions $g: [0, 1]\to [0, 1]$ equipped with the supremum norm $|\cdot|$. For brevity we will write $g h$ for the composition $g\circ h$ for
$g, h\in C([0, 1])$.

By $\Gamma$ we shall denote the topological support of $\mu$.
The pair $(\Gamma, \mu)$ will be called a {stochastic dynamical system on $C([0, 1])$}.

Let $\bf x$ be a $[0, 1]$--random variable, and let $\bf g$ be a $\Gamma$--random variable. Assume $\bf x$ and  $\bf g$ are independent and that they have distributions $\nu$ and $\mu$, respectively. Then we easily see that
the distribution of $\bf g(\bf x)$ is equal to $\mu\ast\nu$ (see for instance \cite{F}) where for every bounded Borel measurable function $f: [0, 1]\to\mathbb R$ we have
$$
\int_{[0, 1]} f(x) \d (\mu\ast\nu) (x)=\int_{\Gamma}\int_{[0, 1]} f(g(x))\d\nu(x)\d\mu(g).
$$

A Borel probabilistic measure $\nu$ is said to be $\mu$--{\it stationary} (or $\mu$--{\it invariant}) if 
$$
\mu\ast\nu=\nu
$$
or equivalently
$$
\nu(\cdot)=\int_{\Gamma} g\nu(\cdot)\,\d \mu(g),
$$
where $g\nu (A)=\nu(g^{-1}(A))$ for any Borel set $A\subset [0, 1]$.

A $\mu$--stationary measure is said to be $\mu$--{\it ergodic} if it cannot be written as a proper convex combination of two different $\mu$--stationary Borel probability measures.  Obviously, if $\nu$ is a unique $\mu$--stationary measure, then it is ergodic. Therefore, a unique $\mu$--stationary measure is called {\it uniquely ergodic}.
On the other hand,  it is well known that any two different $\mu$-ergodic measures are mutually singular  (see \cite{DPZ}).

The main purpose of the paper is to provide a condition on $(\Gamma,\mu)$ which implies unique ergodicity. An important ingredient is bounded variation of the random iteration as follows:
\begin{enumerate}
    \item[(BV)]\label{BV} \qquad\qquad $\mu^{\otimes \mathbb N}\Big(\{(g_1,g_2,\ldots )\in\Gamma^{\mathbb N}: \sup_{n\ge 1} \bigvee\limits_0^1 g_1g_2\dots g_n <\infty\}\Big)>0,$
\end{enumerate}
\vskip 3mm
where $\bigvee_0^1 g$ denotes the variation of the function $g$ on $[0,1]$ (see Section~\ref{prelim} for the definition).

Our main result is 
\begin{thm}\label{thm:main result}
	Let $(\Gamma, \mu)$ be a stochastic dynamical system on $C([0, 1])$ such that  property \hyperref[BV]{(BV)} is satisfied. Then the following statements hold:
    \begin{enumerate}
    \item[({\it i})] If $\nu_1,\nu_2$ are ergodic $\mu$-invariant Borel probability measures that are concentrated on $(0,1)$ and that satisfy
$$
		\{0,1\}\cap \supp(\nu_1)\cap\supp(\nu_2)\neq \emptyset,
$$
	then $\nu_1=\nu_2$.
    \item[({\it ii})] If $(\Gamma,\mu)$ has no atomic $\mu$-invariant probability measures with finite support in the open interval $(0,1)$,  then any two distinct $\mu$-invariant ergodic measures $\nu_1$ and $\nu_2$ on $(0,1)$ satisfy
    \[
        \supp(\nu_1)\cap\supp(\nu_2) =\emptyset.
    \] 
    \end{enumerate}
\end{thm}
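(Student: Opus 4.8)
The plan is to analyse both statements through the forward (martingale) decomposition of each ergodic measure into sample measures. Fix an ergodic $\mu$-invariant measure $\nu$ on $(0,1)$ and, for $\omega=(g_1,g_2,\dots)\in\Gamma^{\mathbb N}$, set $\nu_n^\omega=(g_1g_2\cdots g_n)_*\nu$. A direct computation using $\mu\ast\nu=\nu$ shows that for every bounded continuous $f$ the sequence $\langle\nu_n^\omega,f\rangle$ is a bounded martingale with respect to the filtration generated by $g_1,\dots,g_n$, so it converges $\mu^{\otimes\mathbb N}$-a.s. Passing to a countable dense family of such $f$ yields weak$^*$ limits $\mu_\omega=\lim_n\nu_n^\omega$ (the sample measures), the reconstruction identity $\nu=\int\mu_\omega\,\d\mu^{\otimes\mathbb N}(\omega)$, and the equivariance $\mu_\omega=(g_1)_*\mu_{\sigma\omega}$, where $\sigma$ is the shift. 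I would run this construction simultaneously for $\nu_1$ and $\nu_2$, obtaining sample measures $\mu_{1,\omega},\mu_{2,\omega}$.

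Next I would bring in property \hyperref[BV]{(BV)}. On the positive-measure event $\{\sup_{n}\bigvee_0^1 g_1\cdots g_n\le M\}$ (for a suitable $M$), Helly's selection theorem lets me extract along a subsequence a pointwise limit $h_\omega$ of $g_1\cdots g_n$ with $\bigvee_0^1 h_\omega\le M$ and, away from the at most countable discontinuity set of $h_\omega$, identify $\mu_{i,\omega}=(h_\omega)_*\nu_i$. The computational engine is the Lebesgue--Stieltjes (integration-by-parts) bound: for any $\phi$ of bounded variation, $\bigl|\int\phi\,\d\nu_1-\int\phi\,\d\nu_2\bigr|\le \bigvee_0^1(\phi)\cdot\|F_1-F_2\|_\infty$, where $F_i$ is the distribution function of $\nu_i$. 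Applied to $\phi=f\circ(g_1\cdots g_n)$ with $f$ Lipschitz --- whose variation is at most $\Lip(f)\cdot M$ on the good event --- this converts differences of the transported measures into the supremum distance of the distribution functions of $\nu_1$ and $\nu_2$.

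For statement ({\it i}) the shared endpoint is what breaks the symmetry. Assuming $0\in\supp(\nu_1)\cap\supp(\nu_2)$ (the case of $1$ being symmetric), the right-hand limit $h_\omega(0^+)$ exists because $h_\omega$ is of bounded variation, and the mass that $\nu_i$ places on small neighbourhoods $(0,\eps)$ --- which is positive precisely because $0\in\supp(\nu_i)$ --- is transported by $h_\omega$ to arbitrarily small neighbourhoods of the common value $h_\omega(0^+)$. I would use this to force the sample measures $\mu_{1,\omega}$ and $\mu_{2,\omega}$ to concentrate at the same location, i.e. the system synchronises along the boundary, and then feed this back through the reconstruction identity $\nu_i=\int\mu_{i,\omega}\,\d\mu^{\otimes\mathbb N}$ to conclude $\int f\,\d\nu_1=\int f\,\d\nu_2$ for all continuous $f$, whence $\nu_1=\nu_2$.

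For statement ({\it ii}) I would argue contrapositively: an interior point $x_0\in\supp(\nu_1)\cap\supp(\nu_2)\cap(0,1)$ shared by two distinct (hence mutually singular) ergodic measures should, via the same bounded-variation collapse, produce a finite set invariant under the dynamics and carrying an atomic invariant measure, contradicting the standing hypothesis --- synchronisation at an interior point cannot be absorbed by a boundary and instead yields a finite invariant configuration. The main obstacle is the gap between a positive-probability statement and an almost-sure one: property \hyperref[BV]{(BV)} controls the variation only on an event of positive, possibly small, measure, while on its complement the transported functions have no a priori control, so the one-step estimates above carry an irreducible error term. Closing this requires combining the ergodicity of the shift $(\Gamma^{\mathbb N},\mu^{\otimes\mathbb N},\sigma)$ with the equivariance $\mu_\omega=(g_1)_*\mu_{\sigma\omega}$ to upgrade the positive-probability bounded-variation structure into an almost-sure rigidity of the sample measures; a secondary technical point is matching the pointwise Helly limit $h_\omega$ with the weak$^*$ limit $\mu_\omega$ across the discontinuity set of $h_\omega$, which needs $\nu_i$ to be non-atomic at the relevant jump locations.
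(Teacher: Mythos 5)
Your martingale/sample-measure setup is sound and genuinely different from the paper's argument (the paper never uses sample measures), and the identification $\mu_{i,\omega}=(h_\omega)_*\nu_i$ on the \hyperref[BV]{(BV)}-event does go through: Helly gives pointwise-everywhere convergence along a subsequence, dominated convergence upgrades this to weak convergence of the push-forwards, and the martingale limit identifies it with $\mu_{i,\omega}$, so your ``secondary technical point'' is not actually a problem. The decisive step for ({\it i}), however, fails. From $0\in\supp(\nu_1)\cap\supp(\nu_2)$ you only know that every interval $(0,\eps)$ has positive, possibly arbitrarily small, $\nu_i$-mass; since $\nu_i(\{0\})=0$, this mass tends to $0$ as $\eps\downarrow 0$. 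Transporting by $h_\omega$ therefore yields only that $h_\omega(0^+)$ is a common point of $\supp(\mu_{1,\omega})$ and $\supp(\mu_{2,\omega})$, not that the sample measures ``concentrate at the same location'': two mutually singular measures can share support points (even have identical supports), so no contradiction, and no conclusion via the reconstruction identity, follows. Note also that the obstacle you flag --- upgrading positive probability to almost sure --- is a red herring: if $\nu_1\perp\nu_2$ with witnessing set $A$ (so $\nu_1(A)=1$, $\nu_2(A)=0$), then the reconstruction identity forces $\mu_{1,\omega}(A)=1$ and $\mu_{2,\omega}(A)=0$ almost surely, i.e.\ the sample measures are a.s.\ mutually singular; hence contradicting their singularity on \emph{any} positive-measure event would already suffice. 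What is genuinely missing is a quantitative common lower bound: a fixed $\alpha>0$ together with probability measures $\nu_1^{\eps},\nu_2^{\eps}$, arbitrarily close in Wasserstein distance, satisfying $\nu_i\geq\alpha\nu_i^{\eps}$. Your construction produces nothing of this kind.

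That quantitative bound is exactly what the paper manufactures, and your sketch has no counterpart of its mechanism: a mass-ordering principle for mutually singular measures (Proposition \ref{l1_3.08.22}) gives $a<b$ with $\nu_i([a,b])\geq\delta>0$ and, say, $\nu_1([0,b))<\nu_2([0,a])$; Lemma \ref{l1_10.08.22} (using the shared boundary point) supplies $N$ pairwise disjoint intervals charged by both measures; Birkhoff applied at $\nu_1$- and $\nu_2$-generic points in those intervals, plus a pigeonhole over the time axis, shows that on the \hyperref[BV]{(BV)}-event some tail composition $g_n\cdots g_{i+1}$ maps the \emph{fixed} interval $[a,b]$ --- which carries definite mass $\delta$ for both measures --- into a set of diameter $<\tilde\eps$, since otherwise the variation of $g_n\cdots g_1$ would exceed $K$; transporting the normalized restrictions of $\nu_1,\nu_2$ to $[a,b]$ by these tail maps and invoking invariance yields the dominated, Wasserstein-close measures, and weak compactness of order intervals (Lemmas \ref{lem:order intervals weakly compact}, \ref{11_1.08.22}) closes the contradiction. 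Your part ({\it ii}) has the same deficiency plus a reversed logic: in the paper the no-finite-atomic-invariant-measure hypothesis is used to show that the forward orbit $\varUpsilon$ of the common interior support point is \emph{infinite} (if it were finite, its closure would carry a finite-support atomic invariant measure meeting $(0,1)$), and this infinitude is what supplies the disjoint charged intervals needed to rerun the argument of ({\it i}); nothing in your ``bounded-variation collapse'' produces an invariant finite set, and no argument for that production is offered.
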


Under  additional assumptions, we can conclude unique ergodicity:
\begin{cor}\label{clry:main application}
	Let $(\Gamma, \mu)$ be a stochastic dynamical system on $C([0, 1])$ that has property \hyperref[BV]{(BV)}. Assume that     
	\begin{enumerate}
		\item[$(\mathfrak{U}_1)$]  for every $x\in (0, 1)$ there exists $g\in\Gamma$ such that $g(x)<x$
		  
  or
		\item[$(\mathfrak{U}_2)$]  for every $x\in (0, 1)$ there exists $g\in\Gamma$ such that $g(x)>x$.
		\end{enumerate}
Then, on $(0,1)$, there is at most one $\mu$-invariant Borel probability measure.
\smallskip
If additionally $\nu(\{0\})=\nu(\{1\})=0$ for every $\mu$-invariant Borel probability measure, then $(\Gamma,\mu)$ is uniquely ergodic.
\end{cor}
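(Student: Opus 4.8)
The plan is to reduce everything to part~({\it i}) of Theorem~\ref{thm:main result} by showing that, under either $(\mathfrak{U}_1)$ or $(\mathfrak{U}_2)$, every ergodic $\mu$-invariant measure concentrated on $(0,1)$ is forced to carry a boundary point of $[0,1]$ in its support. Once any two such ergodic measures share the \emph{same} endpoint in their supports, Theorem~\ref{thm:main result}({\it i}) makes them coincide, so there is at most one ergodic invariant measure on $(0,1)$; an ergodic-decomposition argument then upgrades this to at most one invariant measure on $(0,1)$, giving the first assertion.

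First I would record a forward-invariance property of supports. If $\nu$ is any $\mu$-invariant measure and $S=\supp(\nu)$, then applying $\nu(\cdot)=\int_\Gamma \nu(g^{-1}(\cdot))\,\d\mu(g)$ to the open set $U=[0,1]\setminus S$ yields $\nu(g^{-1}U)=0$ for $\mu$-a.e.\ $g$, i.e.\ $g(S)\subseteq S$ for $\mu$-a.e.\ $g$. Since $\{g:\ g(S)\subseteq S\}$ is closed in $(C([0,1]),|\cdot|)$ and has full $\mu$-measure, it contains $\Gamma=\supp(\mu)$; hence $g(S)\subseteq S$ for \emph{every} $g\in\Gamma$.

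Next comes the key step. Assume $(\mathfrak{U}_2)$ and let $\nu$ be $\mu$-invariant with $\nu((0,1))=1$, $S=\supp(\nu)$, and $b=\sup S$; note $b>0$ because $\nu$ lives on $(0,1)$. If $b<1$ then $b\in S\cap(0,1)$ (as $S$ is closed), so $(\mathfrak{U}_2)$ furnishes $g\in\Gamma$ with $g(b)>b$; but $g(b)\in g(S)\subseteq S$ forces $g(b)\le b$, a contradiction. Thus $b=1$, i.e.\ $1\in\supp(\nu)$. Symmetrically, under $(\mathfrak{U}_1)$ every such $\nu$ has $\inf\supp(\nu)=0$, so $0\in\supp(\nu)$. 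Consequently any two ergodic invariant measures $\nu_1,\nu_2$ concentrated on $(0,1)$ share the endpoint $1$ (resp.\ $0$), so $\{0,1\}\cap\supp(\nu_1)\cap\supp(\nu_2)\neq\emptyset$, and Theorem~\ref{thm:main result}({\it i}) gives $\nu_1=\nu_2$. Hence there is at most one ergodic invariant measure concentrated on $(0,1)$.

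To pass from ergodic to arbitrary invariant measures I would invoke the ergodic decomposition: an invariant $\nu$ with $\nu((0,1))=1$ writes as $\nu=\int e\,\d\rho(e)$ over ergodic invariant measures $e$, and $1=\nu((0,1))=\int e((0,1))\,\d\rho(e)$ forces $e((0,1))=1$ for $\rho$-a.e.\ $e$; by the previous step all such $e$ equal a single $e_0$, whence $\nu=e_0$. This proves the first assertion. For the final clause, the hypothesis $\nu(\{0\})=\nu(\{1\})=0$ means every invariant measure is concentrated on $(0,1)$, so uniqueness on $(0,1)$ becomes global uniqueness; existence of at least one invariant measure follows from weak-$\ast$ compactness of the probability measures on the compact space $[0,1]$ together with the Feller property of $\nu\mapsto\mu\ast\nu$ (Krylov--Bogolyubov). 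Thus exactly one $\mu$-invariant measure exists, i.e.\ $(\Gamma,\mu)$ is uniquely ergodic. The main obstacle I anticipate is the key step above: making the passage from $\mu$-a.e.\ forward invariance to invariance along the whole of $\Gamma$ rigorous, and then pinning the extreme point of the support exactly at an endpoint so that Theorem~\ref{thm:main result}({\it i}) can be applied; the remaining ingredients (ergodic decomposition and Krylov--Bogolyubov existence) are standard for Feller systems on a compact space.
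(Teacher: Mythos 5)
Your proof is correct, and its overall skeleton is the same as the paper's: force a common endpoint into the supports of any two ergodic $\mu$-invariant measures concentrated on $(0,1)$, apply Theorem~\ref{thm:main result}({\it i}), pass from ergodic to general invariant measures by ergodic decomposition, and obtain existence for the final clause from compactness (Krylov--Bogolyubov). The genuine difference lies in the endpoint-in-support step, which the paper isolates as Lemma~\ref{lem: 0 in supp}. There, one takes the minimal point $z$ of $\supp(\nu)$ and argues by integration: the set $\{g\in\Gamma : g(z)<z\}$ is open, hence has positive $\mu$-measure since $\Gamma=\supp(\mu)$; for each such $g$ the preimage $g^{-1}([0,z))$ is an open neighbourhood of $z$, so $\nu(g^{-1}([0,z)))>0$; plugging this into the invariance identity gives $\nu([0,z))>0$, contradicting minimality of $z$. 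You instead first prove forward invariance of the support under the whole of $\Gamma$ --- deriving $g(S)\subseteq S$ for $\mu$-a.e.\ $g$ from the invariance identity and upgrading to all $g\in\Gamma$ via closedness of $\{g : g(S)\subseteq S\}$ in the supremum norm --- and then finish with a one-line order argument at $\sup S$ (resp.\ $\inf S$). Both routes consume the same ingredients (the invariance identity, openness of preimages under continuous maps, and $\Gamma=\supp(\mu)$) and are equally elementary. Yours buys a reusable structural fact (everywhere-on-$\Gamma$ invariance of supports, which the paper only records in $\mu$-a.e.\ form inside Lemma~\ref{lem:alternative for Zn}) and invokes $(\mathfrak{U}_1)$/$(\mathfrak{U}_2)$ only at points of $(0,1)$, exactly as the corollary states them (the paper's proof of Lemma~\ref{lem: 0 in supp} phrases the condition on $(0,1]$, harmlessly but slightly mismatched); the paper's route is marginally more self-contained, needing no topological argument on the function space.
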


\begin{cor}\label{cor1_16.09.68}
Let $(\Gamma, \mu)$ be a stochastic dynamical system on $C([0, 1])$ that has property \hyperref[BV]{(BV)}. 
Assume that $(\Gamma, \mu)$ is such a system that  both $(\mathfrak{U}_1)$ and $(\mathfrak{U}_2)$  hold. If  $g((0, 1))\subset (0, 1)$ for every $g\in\Gamma$ and there exist $g_0, g_1\in\Gamma$ with $g_0(0)\in (0, 1)$ and $g_1(1)\in (0, 1)$, then $(\Gamma,\mu)$ is uniquely ergodic.
\end{cor}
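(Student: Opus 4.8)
The plan is to reduce everything to the ``additionally'' clause of Corollary~\ref{clry:main application}. Since $(\Gamma,\mu)$ has property \hyperref[BV]{(BV)} and, in particular, $(\mathfrak{U}_1)$ holds, that corollary already tells us there is at most one $\mu$-invariant Borel probability measure concentrated on $(0,1)$, and it upgrades this to unique ergodicity as soon as one knows that every $\mu$-invariant Borel probability measure $\nu$ satisfies $\nu(\{0\})=\nu(\{1\})=0$. So the entire task is to show that no invariant measure can place mass at the two endpoints; the conclusion then follows verbatim from Corollary~\ref{clry:main application}, with no further work needed.

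To control the endpoint mass, fix a $\mu$-invariant $\nu$ and set $a:=\nu(\{0\})$ and $b:=\nu(\{1\})$. The structural point is that the hypothesis $g((0,1))\subset(0,1)$ forces, for every $g\in\Gamma$,
$$
g^{-1}(\{0\})\subset\{0,1\}\qquad\text{and}\qquad g^{-1}(\{1\})\subset\{0,1\},
$$
since a point of $(0,1)$ can never be sent onto an endpoint. Inserting $A=\{0\}$ and $A=\{1\}$ into the stationarity identity $\nu(A)=\int_\Gamma \nu(g^{-1}(A))\,\d\mu(g)$ and splitting $g^{-1}(A)$ according to whether it contains $0$ or $1$, I get $\nu(g^{-1}(\{0\}))=a\,\1\{g(0)=0\}+b\,\1\{g(1)=0\}$ and similarly for $\{1\}$. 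Writing $p_{ij}:=\mu(\{g:\ g(i)=j\})$ for $i,j\in\{0,1\}$, this yields the closed linear system
$$
a=a\,p_{00}+b\,p_{10},\qquad b=a\,p_{01}+b\,p_{11},
$$
i.e.\ $(a,b)^{\mathsf T}$ is a nonnegative fixed vector of $P=\left(\begin{smallmatrix} p_{00} & p_{10}\\ p_{01} & p_{11}\end{smallmatrix}\right)$.

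The final step is to show $a=b=0$, and this is exactly where the maps $g_0,g_1$ enter. The evaluation maps $g\mapsto g(0)$ and $g\mapsto g(1)$ are $1$-Lipschitz on $(C([0,1]),|\cdot|)$, so $\{g:\ g(0)\in(0,1)\}$ and $\{g:\ g(1)\in(0,1)\}$ are open; as $g_0\in\Gamma=\supp(\mu)$ lies in the first and $g_1\in\Gamma$ in the second, both sets carry positive $\mu$-mass. Since $g(i)\in\{0\}\sqcup(0,1)\sqcup\{1\}$ always, the two column sums of $P$ satisfy $p_{00}+p_{01}=1-\mu(\{g:g(0)\in(0,1)\})<1$ and $p_{10}+p_{11}=1-\mu(\{g:g(1)\in(0,1)\})<1$, so $P$ is strictly substochastic. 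For any $v\ge 0$ one has $\|Pv\|_1\le c\,\|v\|_1$ with $c:=\max\{p_{00}+p_{01},\,p_{10}+p_{11}\}<1$, whence $v=Pv$ and $v\ge 0$ force $v=0$; thus $a=b=0$. I expect the only real obstacle to be organizational rather than computational: seeing that $g((0,1))\subset(0,1)$ closes the endpoint masses into a finite linear system, and that the two ``inward-moving'' maps $g_0,g_1$ are precisely what turns the governing matrix into an $\ell^1$-contraction. Note that both endpoint hypotheses are genuinely needed, since dropping either one lets a Dirac mass at the corresponding endpoint survive as a $\mu$-invariant measure.
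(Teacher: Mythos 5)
Your proof is correct, and at the top level it follows the same route as the paper: both arguments reduce Corollary~\ref{cor1_16.09.68} to the final clause of Corollary~\ref{clry:main application}, so that everything hinges on showing $\nu(\{0\})=\nu(\{1\})=0$ for every $\mu$-invariant $\nu$, and both extract the needed positivity from the fact that $\{g:g(0)\in(0,1)\}$ and $\{g:g(1)\in(0,1)\}$ are open subsets of $C([0,1])$ meeting $\supp(\mu)=\Gamma$ at $g_0$, resp.\ $g_1$. The difference is in how the endpoint mass is eliminated. The paper applies invariance to the single set $\{0,1\}$, splits $\Gamma$ according to whether $g^{-1}(\{0,1\})$ equals $\emptyset$, $\{0\}$, $\{1\}$ or $\{0,1\}$, and aims at the strict inequality $\nu(\{0,1\})<\nu(\{0,1\})$; you instead track $a=\nu(\{0\})$ and $b=\nu(\{1\})$ separately, exhibiting $(a,b)$ as a nonnegative fixed vector of a $2\times 2$ matrix $P$ whose column sums are $1-\mu(\{g:g(0)\in(0,1)\})<1$ and $1-\mu(\{g:g(1)\in(0,1)\})<1$, whence $a=b=0$. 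Your bookkeeping buys something real: keeping the two atoms separate automatically accommodates mass being exchanged between the endpoints ($0\mapsto 1$ under some maps, $1\mapsto 0$ under others). The paper's lumped computation is delicate at exactly this point: its penultimate estimate bounds the integrals of $\nu(\{1\})$ and $\nu(\{0\})$ over the events $\{g:g^{-1}(\{0,1\})=\{1\}\}$ and $\{g:g^{-1}(\{0,1\})=\{0\}\}$ --- which are disjoint from $\{g:g(\{0,1\})\subset\{0,1\}\}$ --- by an integral over $\{g:g(\{0,1\})\subset\{0,1\}\}$ alone, and making that step airtight requires a small case analysis (according to which of $a$, $b$ is positive) that your $\ell^1$-contraction formulation renders unnecessary. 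One quibble: your closing sharpness remark is loose, since dropping the hypothesis on $g_0$ does not by itself make $\delta_0$ invariant (mass at $0$ could still exit via $1$); it only permits examples, e.g.\ systems in which every $g$ fixes $0$, where the conclusion fails.
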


The outline of the paper is as follows.
Section~\ref{prelim} contains some notation and definitions.
In Section~\ref{sec: mt} we collect some measure--theoretic results which will be needed for our proofs. In Section~\ref{sec: main} we will use \hyperref[BV]{(BV)} and prove Theorem~\ref{thm:main result} and Corollaries~\ref{clry:main application} and \ref{cor1_16.09.68}. Finally, in Section~\ref{sec: ex} we provide different sufficient conditions for \hyperref[BV]{(BV)} which are easy to check in many examples. Furthermore, in this section, we provide large classes of uniquely ergodic stochastic dynamical systems on $(0,1)$.

\section{Preliminaries}\label{prelim}

By $\mathcal{M}$ we denote the space of all finite signed Borel measures on $[0, 1]$, and by $\mathcal{M}^{+}$ the cone of positive finite Borel measures. Let $\mathcal{P}$ denote the convex subset of all Borel probability measures on $[0, 1]$.
We equip $\mathcal{P}$,  with the {\it Wasserstein distance} $d_W$, i.e.,
$$
d_W(\nu_1, \nu_2)=\sup\left|\int_{[0, 1]} f\d \nu_1 -\int_{[0, 1]} f\d \nu_2\right|\quad\text{for any $\nu_1, \nu_2\in \mathcal P$},
$$
where the supremum is taken over all Lipschitz functions $f: [0, 1]\to\mathbb R$ with the Lipschitz constant less than or equal to $1$ (the so--called $1$--Lipschitz functions).

It is well known that the space $\mathcal{P}$ with the {Wasserstein distance} $d_W$ is a complete metric space and the convergence in the Wasserstein metric is equivalent to the weak convergence of measures.

\medskip

Let $g: [0, 1]\to [0, 1]$, and let $[a, b]\subset [0, 1]$.
By $\bigvee_{a}^b g$ we shall denote the variation of $g$ on $[a, b]$, i.e. 
$$
\bigvee_{a}^b g=\sup s_n(g),
$$
where the supremum is taken over all partitions $a=x_0<x_1<\cdots<x_n=b$ and
$$
s_n(g)=\sum_{i=1}^n |g(x_i)-g(x_{i-1})|.
$$

All properties of the variation of a function used in our arguments can be found in \cite{LM}, Section 6.1 and in \cite{Loj}. 

By $|f|_L$ we denote the Lipschitz constant for a Lipschitz function 
 $f: [0, 1]\to\mathbb R$. 
 
 \begin{remark}\label{r2_09.09.24}
 It is easy to see that if $f$ is a Lipschitz function, then
$$
\bigvee_{a}^b  f g\le |f|_L \bigvee_{a}^b g
$$ 
for any $g: [0, 1]\to [0, 1]$. 
\end{remark}

The following remark may be derived from the definition of variation in the straightforward way.

\begin{remark}\label{r1_09.09.24} For every continuous  function $g: I\to \mathbb R$ ($I$--an interval in $\mathbb R$) such that it admits a countable set $D'$ of points at which it is not differentiable and this set has only finitely many accumulation points and $\sup_{x\in I\setminus D'} |g'(x)|<+\infty$ we have
$|g|_L=\sup_{x\in I\setminus D'} |g'(x)|$. On the other hand,
$\sup_{x\in I\setminus D'} |g'(x)|=\esssup_{x\in I} |g'(x)|$, therefore $|g|_L=\esssup_{x\in I} |g'(x)|$.
\end{remark}



\medskip

\section{Some measure-theoretic auxiliary results}\label{sec: mt}

We start with two results -- which may be of independent interest -- that hold in any Polish space $S$. Analogously as in the case of the interval $[0, 1]$ by $\mathcal{M}(S)$,  $\mathcal{M}^+(S)$ and $\mathcal{P}(S)$ we denote the family of all signed finite Borel measures, all finite Borel measures and all Borel probability measures, respectively.

\begin{lem}\label{lem:order intervals weakly compact}
	Let $S$ be a Polish space and let $\nu_i\in\mathcal{M}(S)$, $i=1,2$, such that $\nu_1\leq\nu_2$. The order interval
	\[
	[\nu_1,\nu_2]\ :=\ \bigl\{\nu\in\mathcal{M}(S)\colon \nu_1\leq\nu\leq\nu_2\bigr\}
	\]
	is compact in every vector space topology on $\mathcal{M}(S)$ that coincides with the weak topology on $\mathcal{M}^+(S)$ and for which $\mathcal{M}^+(S)$  is closed in $\mathcal{M}(S)$.
\end{lem}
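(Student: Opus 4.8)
The plan is to reduce the statement to the weak compactness of an order interval inside the positive cone $\mathcal{M}^+(S)$, where the topology is pinned down, and then transfer compactness to the ambient topology by a soft subspace argument. First I would set $\lambda:=\nu_2-\nu_1\in\mathcal{M}^+(S)$ and use the affine bijection $\sigma\mapsto\nu_1+\sigma$ to identify the order interval $[\nu_1,\nu_2]$ with
$[0,\lambda]=\{\sigma\in\mathcal{M}^+(S)\colon 0\le\sigma\le\lambda\}$. Since this map is a translation, it is a homeomorphism for every vector space topology on $\mathcal{M}(S)$, so it suffices to prove that $[0,\lambda]$ is compact.

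Next I would establish that $[0,\lambda]$ is weakly compact in $\mathcal{M}^+(S)$ via Prokhorov's theorem. Every $\sigma\in[0,\lambda]$ satisfies $\sigma(S)\le\lambda(S)<\infty$, so the family is uniformly bounded in mass. For tightness, recall that on a Polish space the finite measure $\lambda$ is inner regular with respect to compact sets (Ulam's theorem): given $\eps>0$, choose a compact $K$ with $\lambda(S\setminus K)<\eps$; then $\sigma(S\setminus K)\le\lambda(S\setminus K)<\eps$ for every $\sigma\le\lambda$, which gives uniform tightness. Hence $[0,\lambda]$ is relatively weakly compact. It is also weakly closed in $\mathcal{M}^+(S)$: if $\sigma_n\to\sigma$ weakly with $0\le\sigma_n\le\lambda$, then testing against nonnegative bounded continuous functions shows $\sigma\ge 0$ and $\lambda-\sigma\ge 0$, because a weak limit of positive measures is positive. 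Being weakly closed and relatively weakly compact, $[0,\lambda]$ is weakly compact.

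Finally I would transfer this to the given topology $\tau$. By hypothesis the restriction of $\tau$ to $\mathcal{M}^+(S)$ coincides with the weak topology, so $[0,\lambda]$ is compact for $\tau|_{\mathcal{M}^+(S)}$. Since compactness of a set depends only on its own subspace topology, and since $[0,\lambda]\subset\mathcal{M}^+(S)$ forces the subspace topology inherited from $(\mathcal{M}(S),\tau)$ to agree with the one inherited from $\mathcal{M}^+(S)$ (transitivity of the subspace construction), the set $[0,\lambda]$ is compact in $(\mathcal{M}(S),\tau)$. Translating back, $[\nu_1,\nu_2]=\nu_1+[0,\lambda]$ is $\tau$-compact. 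The closedness hypothesis on $\mathcal{M}^+(S)$ enters as a natural companion: writing $[\nu_1,\nu_2]=(\nu_1+\mathcal{M}^+(S))\cap(\nu_2-\mathcal{M}^+(S))$ as an intersection of two $\tau$-closed translates of the cone shows that the order interval is itself $\tau$-closed, so no limit points escape the cone when passing to the ambient space.

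The main obstacle is the weak-compactness step: the real content is Prokhorov's theorem, together with deriving uniform tightness from domination by the single tight measure $\lambda$ and verifying weak closedness of the order interval. The passage to an arbitrary vector space topology $\tau$ is then a formal consequence of the assumed coincidence of topologies on $\mathcal{M}^+(S)$, with the closedness assumption supplying $\tau$-closedness of $[\nu_1,\nu_2]$.
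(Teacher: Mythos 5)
Your proof is correct and follows essentially the same route as the paper's: translate $[\nu_1,\nu_2]$ to $[0,\lambda]$ with $\lambda=\nu_2-\nu_1$, derive uniform tightness of $[0,\lambda]$ from domination by the single tight measure $\lambda$, apply Prokhorov's theorem, and transfer compactness to the ambient topology via the assumed coincidence of topologies on $\mathcal{M}^+(S)$. The only minor divergence is how closedness of the order interval is obtained: the paper deduces it from the hypothesis that $\mathcal{M}^+(S)$ is closed in the vector space topology (writing $[0,\lambda]=\mathcal{M}^+(S)\cap(\lambda-\mathcal{M}^+(S))$), whereas you verify weak closedness directly by testing against nonnegative bounded continuous functions, which in fact shows the compactness conclusion does not need that closedness hypothesis at all.
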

\begin{proof}
	One has	$[\nu_1,\nu_2] = \nu_1 + [0,\nu_2-\nu_1]$. Because the topology on $\mathcal{M}(S)$ makes addition continuous, it suffices to show that the order intervals $[0,\nu]$ are compact, for any $\nu\in\mathcal{M}^+(S)$. First, observe that 
	\[
	[0,\nu] = \bigl\{ \hat{\nu}\in\mathcal{M}^+(S)\colon \nu- \hat{\nu}\geq 0\bigr\} = \mathcal{M}^+(S) \cap (\nu-\mathcal{M}^+(S)).
	\]
	$\mathcal{M}^+(S)$ is closed in the chosen vector space topology on $\mathcal{M}(S)$. We conclude that $\nu-\mathcal{M}^+(S)$ is closed and therefore also $[0,\nu]$ in $\mathcal{M}^+(S)$. Since $\nu$ is tight,  for every $\eps>0$ there exists $K_\eps$ compact such that $\nu(S\setminus K_\eps)<\eps$. Then also $\hat{\nu}(S\setminus K_\eps)<\eps$ for every $\hat{\nu}\in[0,\nu]$. So, $[0,\nu]$ is uniformly tight. Prokhorov's Theorem for positive measures (see e.g. \cite{Bog}, Theorem 8.6.2 ) yields that $[0,\nu]$ is compact in the weak topology, hence in the vector space topology on $\mathcal{M}(S)$.
\end{proof}

It is used to obtain the following:

\begin{lem}\label{11_1.08.22}
	Let $S$ be a Polish space an let $\nu_1, \nu_2\in\mathcal M^{+}(S)$ be given. Assume that there exists $\alpha>0$ such that for any $\epsilon>0$ one may find two measures $\nu_1^{\epsilon}, \nu_2^{\epsilon}\in\mathcal P(S)$ with $d_W(\nu_1^{\epsilon}, \nu_2^{\epsilon})<\epsilon$ such that
	$$
	\nu_i\ge\alpha\nu_i^{\epsilon}\quad\text{for $i=1, 2$.}
	$$
	Then the measures $\nu_1, \nu_2$ are not mutually singular.\\
	\end{lem}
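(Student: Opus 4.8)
The plan is to exhibit a single nonzero measure $\rho\in\mathcal{M}^{+}(S)$ that is dominated simultaneously by $\nu_1$ and by $\nu_2$; the existence of such a $\rho$ immediately excludes mutual singularity, since a common lower bound cannot live on two disjoint carriers. So the whole argument is about manufacturing $\rho$ as a weak limit of the dominated probability measures and checking that it survives with positive mass.

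First I would fix a sequence $\epsilon_n\downarrow 0$ and consider the measures $\alpha\nu_i^{\epsilon_n}$ for $i=1,2$. By hypothesis $\alpha\nu_i^{\epsilon_n}\le\nu_i$, so $\alpha\nu_i^{\epsilon_n}\in[0,\nu_i]$. By Lemma~\ref{lem:order intervals weakly compact} each order interval $[0,\nu_i]$ is weakly compact; since the weak topology on $\mathcal{M}^{+}(S)$ is metrizable for a Polish $S$, compactness here is sequential. Extracting first a subsequence for $i=1$ and then a further subsequence for $i=2$, I obtain a common subsequence, still written $\epsilon_n$, along which $\alpha\nu_1^{\epsilon_n}\to\rho_1$ and $\alpha\nu_2^{\epsilon_n}\to\rho_2$ weakly. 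Because the order intervals are closed, the limits satisfy $\rho_i\in[0,\nu_i]$, i.e. $\rho_i\le\nu_i$.

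Next I would check that the two limits coincide and do not vanish. For any bounded Lipschitz $f$, weak convergence gives $\int f\,\d(\alpha\nu_i^{\epsilon_n})\to\int f\,\d\rho_i$, while the Wasserstein hypothesis yields $\bigl|\int f\,\d\nu_1^{\epsilon_n}-\int f\,\d\nu_2^{\epsilon_n}\bigr|\le |f|_L\,d_W(\nu_1^{\epsilon_n},\nu_2^{\epsilon_n})<|f|_L\,\epsilon_n\to 0$. Passing to the limit shows $\int f\,\d\rho_1=\int f\,\d\rho_2$ for every bounded Lipschitz $f$, and since such functions are measure-determining on a metric space, $\rho_1=\rho_2=:\rho$. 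Testing against the constant function $1\in C_b(S)$ gives $\rho(S)=\lim_n \alpha\nu_i^{\epsilon_n}(S)=\alpha>0$, so $\rho$ is nonzero. We now have $\rho\le\nu_1$, $\rho\le\nu_2$, and $\rho(S)=\alpha>0$. If $\nu_1$ and $\nu_2$ were mutually singular there would be a Borel set $A$ with $\nu_1(A)=0$ and $\nu_2(S\setminus A)=0$; then $\rho(A)\le\nu_1(A)=0$ and $\rho(S\setminus A)\le\nu_2(S\setminus A)=0$, forcing $\rho(S)=0$, a contradiction. Hence $\nu_1$ and $\nu_2$ are not mutually singular.

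The step I expect to be the main obstacle is guaranteeing that the limiting measure $\rho$ keeps full mass $\alpha$, i.e. that no mass escapes to infinity on the possibly noncompact Polish space $S$. This is precisely what the weak compactness (hence, via Prokhorov, uniform tightness) of the order intervals from Lemma~\ref{lem:order intervals weakly compact} secures, with mass preserved because the constant function belongs to the test class; the Wasserstein closeness then does the separate job of forcing the two limits to be the same measure. Everything else is routine.
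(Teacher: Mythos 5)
Your proof is correct and follows essentially the same route as the paper's: both use the weak compactness of the order intervals from Lemma~\ref{lem:order intervals weakly compact} to extract a common weakly convergent subsequence, use the Wasserstein closeness to identify the two limits, and conclude that the common limit is a nonzero measure dominated (up to the factor $\alpha$) by both $\nu_1$ and $\nu_2$. The only difference is that you spell out two steps the paper leaves implicit — identifying the limits via Lipschitz test functions and the final contradiction with mutual singularity — which is harmless.
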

	
\begin{proof} Take two sequences of probability measures $(\nu_1^{1/n})_{n\ge 1}$ and $(\nu_2^{1/n})_{n\ge 1}$ satisfying
	$d_W (\nu_1^{1/n}, \nu_2^{1/n})<1/n$ and $\nu_i\ge\alpha
	\nu_i^{1/n}$ for $i=1, 2$. Since the sets
	\begin{equation}\label{e1_1.07.22}
		\{\nu\in\mathcal M^+(S): \alpha\nu\le\nu_i\}\quad\text{for $i=1, 2$,}
	\end{equation}
	are weakly compact in $\mathcal M^+(S)$ according to Lemma \ref{lem:order intervals weakly compact}, we may find an increasing  subsequence of integers $(m_n)_{n\ge 1}$ such that $(\nu_1^{1/m_n})_{n\ge 1}$ and $(\nu_2^{1/m_n})_{n\ge 1}$ are weakly convergent. Moreover, as $d_W (\nu_1^{1/m_n}, \nu_2^{1/m_n})<1/m_n$, they converge to the same measure, say, $\hat\nu$. Obviously $\hat\nu\in\mathcal P(S)$
	and, using again the fact that the sets defined by (\ref{e1_1.07.22}) are weakly compact, hence closed, we finally obtain
	$$
	\nu_i\ge\alpha\hat\nu\qquad\text{for $i=1, 2$,}
	$$
	which completes the proof.
\end{proof}

We continue with technical results that use that the underlying space is $[0,1]$.

\begin{lem}\label{lemma:shift to smaller c}
	Let $\nu\in\mathcal P$ and $\delta\geq 0$ and $0<c\leq 1$ such that $\nu \bigl([0,c)\bigr) >\delta$. Then there exists $\bar{c}\in\supp(\nu)$ such that $0\leq\bar{c}<c$. $\nu\bigl([\bar{c},c)\bigl)>0$ and $\nu \bigl([0,\bar{c}]\bigr)>\delta$.
\end{lem}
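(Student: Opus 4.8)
The plan is to cut the interval $[0,c)$ at a support point $\bar c$ that sits just high enough that the mass in $[0,\bar c]$ still exceeds $\delta$, yet low enough that a positive amount of mass survives in the sliver $[\bar c,c)$. Write $\eta=\nu([0,c))-\delta>0$ for the available slack. The two requirements $\nu([0,\bar c])>\delta$ and $\nu([\bar c,c))>0$ pull $\bar c$ in opposite directions, so to secure both strict inequalities at once I would first fix an intermediate threshold $\delta'$ with $\delta<\delta'<\nu([0,c))$ and set $\eta'=\nu([0,c))-\delta'>0$. The number $\delta'$ is designed to give $\nu([0,\bar c])\ge\delta'>\delta$ on one side and $\nu([\bar c,c))\ge\eta'>0$ on the other.

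Next I would locate the cut by a quantile construction. Using that $t\mapsto\nu([0,t])$ is nondecreasing and right-continuous (continuity from above along $[0,t_n]\downarrow[0,t]$), define $q=\inf\{t\in[0,c]:\nu([0,t])\ge\delta'\}$. This set is nonempty because $\nu([0,c])\ge\nu([0,c))>\delta'$, right-continuity gives $\nu([0,q])\ge\delta'$, while $\nu([0,t])<\delta'$ for every $t<q$ yields $\nu([0,q))\le\delta'$ by continuity from below. Since $\nu([0,t))\uparrow\nu([0,c))>\delta'$ as $t\uparrow c$, some $t_0<c$ already satisfies $\nu([0,t_0])>\delta'$, so $q<c$; moreover $\nu([q,c))=\nu([0,c))-\nu([0,q))\ge\eta'>0$. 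The only defect of $q$ is that it need not lie in $\supp(\nu)$, so I would correct it by taking $\bar c=\inf\bigl(\supp(\nu)\cap[q,c)\bigr)$. This set is nonempty because $\nu([q,c))\ge\eta'>0$ forces $[q,c)$ to meet the support, hence $\bar c\in\supp(\nu)$ as an infimum of points of the closed set $\supp(\nu)$, and $q\le\bar c<c$. Since $[q,\bar c)$ contains no support points we get $\nu([q,\bar c))=0$, so that $\nu([\bar c,c))=\nu([q,c))\ge\eta'>0$ and $\nu([0,\bar c])\ge\nu([0,q])\ge\delta'>\delta$, which give all the assertions of the lemma.

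The main obstacle is exactly the competition between the two strict inequalities and the constraint $\bar c\in\supp(\nu)$: a naive choice such as $\bar c=\sup(\supp(\nu)\cap[0,c))$ can fail, either because that supremum equals $c$ (support points accumulating at $c$) or because the cut point carries no mass to its right. The threshold $\delta'$ neutralises the first tension by installing a fixed gap above $\delta$ and a fixed gap below $\nu([0,c))$, and the passage from $q$ to the support-infimum $\bar c$ neutralises the second; the key point is that moving from $q$ up to the first support point crosses only a $\nu$-null set and therefore disturbs none of the mass estimates. I would finally remark that the construction is uniform in the edge cases $\delta=0$ and in the presence of an atom at $\bar c$, since every inequality was arranged to be strict from the outset.
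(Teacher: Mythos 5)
Your proof is correct, and it takes a genuinely different route from the paper's. The paper works from the top: it sets $c^* :=\sup\bigl([0,c)\cap\supp(\nu)\bigr)$ and then runs a case analysis --- whether $c^*=c$ (support points accumulating at $c$, handled by continuity of measure along an increasing sequence of support points) or $c^*<c$, the latter splitting again according to whether $\nu(\{c^*\})>0$ or not, with the atomless sub-case reduced back to the first case. Your construction avoids all of this: you exploit monotonicity and right-continuity of $t\mapsto\nu([0,t])$ to define the quantile $q$ at an intermediate level $\delta'$, and then pass to $\bar c=\inf\bigl(\supp(\nu)\cap[q,c)\bigr)$, using that the complement of the support is $\nu$-null so that the shift from $q$ to $\bar c$ costs no mass. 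The slack parameter $\delta'$ resolves in one stroke the tension between the two strict inequalities that forces the paper into its case distinctions (indeed, the failure modes you list for the naive choice $\sup(\supp(\nu)\cap[0,c))$ are exactly the cases the paper must treat separately). What each approach buys: the paper's argument uses only the definition of support and sequential continuity of measures, and its sequence-of-support-points technique is reused almost verbatim in the companion Lemma \ref{lem:increase c}; your argument is shorter, uniform over all cases (including $\delta=0$ and an atom at $\bar c$), and isolates the two standard facts doing the real work --- right/left continuity of the distribution function and nullity of the complement of the support. Both are valid; yours is arguably the cleaner proof of this particular statement, though if you also wanted Lemma \ref{lem:increase c} you would repeat a mirrored quantile construction there.
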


\begin{proof}
	Since $\nu\bigl([0,c)\bigr)>0$, $[0,c)\cap\supp(\nu)\neq\emptyset$. Define
	\[
	c^* :=\sup\bigl(\, [0,c)\cap\supp(\nu)\,\bigr).
	\]
	Then $c^*\in\supp(\nu)$ and $0\leq c^*\leq c$. If $c^*=c$, then there exists a stricty increasing sequence $(c_n)$ in $[0,c)\cap\supp(\nu)$ such that $c_n\uparrow c^*$. Then
	\[
	\delta<\nu\bigl( [0,c^*)\bigr) = \nu\bigl( \bigcup_{n=1}^\infty [0,c_n] \bigr) = \lim_{n\to\infty} \nu\bigl( [0,c_n]\bigr).
	\]
	Pick $n_0$ such that $\nu \bigl([0,c_{n_0}]\bigr) >\delta$. Then $c_{n_0+1}\in(c_{n_0},c^*)\cap \supp(\nu)$. So, one gets $\nu\bigl((c_{n_0},c^*)\bigr)>0$. Then $\bar{c}:=c_{n_0}$ has the desired properties.\\
	If $c^*<c$, then $\nu\bigl((c^*,c)\bigr) =0$. Consequently,  $\nu \bigl([0,c^*]) >\delta$. If $\nu(\{c^*\})>0$, then $\bar{c}:=c_*$ satisfies the required properties. If $\nu(\{c^*\})=0$, then $c^*\in\supp(\nu)$ implies that for every $\epsilon>0$, $\nu\bigl( (c^*-\epsilon, c^*+\epsilon)\bigr)>0$. Since $\nu([c^*,c))=0$, for all $\epsilon>0$ sufficiently small, $\nu \bigl( (c^*-\epsilon,c^*) \bigr)>0$. Consequently, there exists a strictly increasing sequence $(c_n)$ in $\supp(\nu)$ with $c_n\uparrow c^*$. Similar reasoning as in the case `$c^*=c$' above, now yields $\bar{c}:=c_{n_0}$ with the required properties.
\end{proof}

Similarly, one has
\begin{lem}\label{lem:increase c}
	Let $\nu\in\mathcal P$, $0<\delta\leq1$ and $0\leq c<1$ such that $\nu \bigl([0,c)\bigr)<\delta$ and $\nu \bigl(\{c\}\bigr)=0$. Then there exists $\tilde{c}\in\supp(\nu)$ such that $c<\tilde{c}\leq 1$, $\nu\bigl((c,\tilde{c}]\bigr)>0$ and $\nu\bigl([0,\tilde{c})\bigr)<\delta$.
\end{lem}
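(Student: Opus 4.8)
The plan is to mirror the proof of Lemma~\ref{lemma:shift to smaller c}, but now reading the interval from the right-hand endpoint. First I would record the consequence of the hypotheses $\nu([0,c))<\delta$ and $\nu(\{c\})=0$, namely $\nu([0,c])<\delta\le 1$, so that $\nu((c,1])=1-\nu([0,c])>0$. Since $\supp(\nu)$ carries full measure, this forces $(c,1]\cap\supp(\nu)\neq\emptyset$, and I may therefore set
\[
c_* := \inf\bigl(\,(c,1]\cap\supp(\nu)\,\bigr),
\]
which lies in $\supp(\nu)$ (the support being closed) and satisfies $c\le c_*\le 1$. By definition of the infimum, $(c,c_*)\cap\supp(\nu)=\emptyset$, hence $\nu((c,c_*))=0$, and consequently $\nu([0,c_*))=\nu([0,c])<\delta$.

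Then I would split into two cases according to whether $c_*$ carries an atom. If $\nu(\{c_*\})>0$, then $c_*>c$ (as $\nu(\{c\})=0$), and I simply take $\tilde c:=c_*$: the three required properties are immediate, since $\nu((c,\tilde c])=\nu((c,c_*))+\nu(\{c_*\})=\nu(\{c_*\})>0$, while $\nu([0,\tilde c))<\delta$ was just shown.

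The interesting case is $\nu(\{c_*\})=0$, which is where I expect the main difficulty. Here I would first argue $c_*<1$: otherwise $c_*=1$ together with $\nu(\{1\})=0$ would give $\nu([0,1))=1$, contradicting $\nu([0,c_*))<\delta\le 1$. Next, using that $\nu$ has no mass on $(c,c_*]$ (because $\nu((c,c_*))=0$ and $\nu(\{c_*\})=0$) while $c_*\in\supp(\nu)$, every two-sided neighbourhood of $c_*$ has positive measure, so all the mass must sit on the right: $\nu((c_*,c_*+\eps))>0$ for every small $\eps>0$. This produces a strictly decreasing sequence $(\tilde c_n)$ in $\supp(\nu)$ with $\tilde c_n\downarrow c_*$. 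Since $\bigcap_{n}[0,\tilde c_n)=[0,c_*]$, continuity from above together with $\nu([0,c_*])=\nu([0,c_*))<\delta$ gives $\nu([0,\tilde c_n))\to\nu([0,c_*])<\delta$, so I may fix $n_0$ with $\nu([0,\tilde c_{n_0}))<\delta$. Finally $\tilde c_{n_0+1}\in(c,\tilde c_{n_0})\cap\supp(\nu)$ forces $\nu((c,\tilde c_{n_0}))>0$, hence $\nu((c,\tilde c_{n_0}])>0$, and $\tilde c:=\tilde c_{n_0}$ has all the desired properties.

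The main obstacle is the atomless case: one must extract, from the mere membership $c_*\in\supp(\nu)$ combined with the vanishing of $\nu$ on a left half-neighbourhood of $c_*$, a sequence of support points approaching $c_*$ strictly from above, which in particular requires ruling out $c_*=1$. The remaining verifications (continuity from above for the decreasing sets $[0,\tilde c_n)$, and the positivity $\nu((c,\tilde c_{n_0}))>0$ coming from a support point lying inside an open interval) are routine, exactly as in the preceding lemma.
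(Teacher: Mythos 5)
Your construction follows the paper's own proof: you define $c_* := \inf\bigl((c,1]\cap\supp(\nu)\bigr)$, observe $\nu\bigl([0,c_*)\bigr)=\nu\bigl([0,c]\bigr)<\delta$, and then either take $\tilde c = c_*$ (when $c_*$ is an atom) or extract a strictly decreasing sequence of support points $\tilde c_n\downarrow c_*$ and use continuity from above of $\nu$ on the sets $[0,\tilde c_n)$. Your handling of the sub-cases with $c_*>c$ is correct, and in fact more detailed than the paper, which writes out only the case $c_*=c$ and defers the rest with ``one proceeds similarly.''

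However, there is a gap precisely in the case the paper does write out, namely $c_*=c$. This case is not excluded by the hypotheses (take $\nu$ to be Lebesgue measure and $c=1/2$: then $\nu(\{c\})=0$ and $c_*=c$), and it always lands in your atomless branch, since $\nu(\{c_*\})=\nu(\{c\})=0$. There your key inference --- ``$\nu$ has no mass on $(c,c_*]$ and $c_*\in\supp(\nu)$, so all the mass must sit on the right, i.e.\ $\nu\bigl((c_*,c_*+\eps)\bigr)>0$'' --- does not follow: when $c_*=c$ the interval $(c,c_*]$ is empty, so the ``no mass to the left'' input is vacuous, and for a two-sided neighbourhood $(c-\eps,c+\eps)$ of positive measure nothing you have said rules out that all of its mass lies in $(c-\eps,c)$. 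The conclusion you need is still true, but for a different reason: when $c_*=c$ the infimum is not attained in $(c,1]\cap\supp(\nu)$ (because $c\notin(c,1]$), so by the very definition of the infimum there is a strictly decreasing sequence in $(c,1]\cap\supp(\nu)$ converging to $c_*=c$ --- exactly the sequence your argument requires, and exactly how the paper treats this case. With that one-line repair (split off $c_*=c$ and invoke the definition of the infimum), the remainder of your proof --- continuity from above, $\nu\bigl([0,c_*]\bigr)=\nu\bigl([0,c_*)\bigr)<\delta$, the choice of $n_0$, and the positivity $\nu\bigl((c,\tilde c_{n_0}]\bigr)>0$ coming from the support point $\tilde c_{n_0+1}$ --- goes through verbatim.
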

\begin{proof}
	The idea of proof is similar to that of Lemma \ref{lemma:shift to smaller c}. First, since $\nu(\{c\})=0$,
	\[
	\nu\bigl( (c,1]\bigr) = 1 -\nu \bigl([0,c))\bigr) > 1 -\delta\geq 0.
	\]
	Therefore, $(c,1]\cap \supp(\nu) \neq\emptyset$. Define
	\[
	c_* := \inf\bigl(\, (c,1]\cap\supp(\nu)\, \bigr).
	\]
	Then $c_*\in\supp(\nu)$ and $c\leq c_*\leq 1$. If $c_*=c$, then there exists a strictly decreasing sequence $(c_n)$ in $\supp(\nu)$ such that $c_n\downarrow c_*$. Because $\nu (\{c\})=0$, one obtains
	\[
	\delta > \nu\bigl([0,c)\bigr) = \nu \bigl([0,c^*]\bigr) = \nu\bigl( \bigcap_{n=1}^\infty [0,c_n) \bigr) = \lim_{n\to\infty} \nu \bigl( [0,c_n)\bigr).
	\]
	Then one proceeds similar to the proof of Lemma \ref{lemma:shift to smaller c}, also in the case $`c_*>c$'.
\end{proof}

In preparation of the proof of the main result, Theorem \ref{thm:main result}, 
we continue with a crucial property of `ordering of mass' for two mutually singular probability measures. The proof uses elementary techniques, but requires a delicate case-by-case analysis.

\begin{prop}[{\bf Mass ordering principle}]\label{l1_3.08.22}
	Let $\nu_1, \nu_2\in\mathcal P([0,1])$ be mutually singular. Then there exist $a, b\in [0, 1]$, $a<b$ such that 
	$\nu_i([a, b])>0$ for $i=1,2$
	and
	$$
	\text{either}\quad\nu_1([0, b))<\nu_2([0, a])\quad\text{or}\quad
	\nu_2([0, b))<\nu_1([0, a]).
	$$
\end{prop}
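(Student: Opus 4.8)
The plan is to translate everything into cumulative distribution functions and then reduce the statement to the existence of a single well-chosen threshold $c$ together with a level $\delta$; the two preparatory lemmas will then do all the remaining work. Write $F_i(x) := \nu_i([0,x])$, so that $\nu_i([0,c)) = \lim_{y\uparrow c} F_i(y)$. Since $\nu_1$ and $\nu_2$ are mutually singular probability measures they are distinct, hence $F_1 \not\equiv F_2$. Thus there is a point $c_0$ at which the two CDFs differ, and after possibly interchanging the roles of $\nu_1$ and $\nu_2$ (which merely interchanges the two alternatives in the conclusion) I may assume $\nu_1([0,c_0]) > \nu_2([0,c_0])$; note $c_0 < 1$ because $F_1(1)=F_2(1)=1$. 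I will aim for the alternative $\nu_2([0,b)) < \nu_1([0,a])$.

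The core step is to produce $c \in (0,1)$ with $\nu_2(\{c\}) = 0$ and a number $\delta$ satisfying $\nu_2([0,c)) < \delta < \nu_1([0,c))$. To find such a $c$, I would push the threshold slightly to the right of $c_0$: for $c > c_0$ one has $\nu_1([0,c)) \ge \nu_1([0,c_0])$, while $\nu_2([0,c)) = \nu_2([0,c_0]) + \nu_2((c_0,c)) \to \nu_2([0,c_0])$ as $c \downarrow c_0$, since $(c_0,c)\downarrow\emptyset$. Because $\nu_1([0,c_0]) > \nu_2([0,c_0])$ strictly, the inequality $\nu_1([0,c)) > \nu_2([0,c))$ therefore persists for all $c$ in some right neighbourhood $(c_0, c_0+\eta)$; shrinking $\eta$ to keep $c_0 + \eta < 1$ and discarding the at most countably many atoms of $\nu_2$, I can select $c$ in this neighbourhood with $\nu_2(\{c\})=0$. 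Any $\delta$ strictly between $\nu_2([0,c))$ and $\nu_1([0,c))$ then satisfies $0 < \delta < 1$, so it meets the hypotheses of both lemmas.

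With $(c,\delta)$ in hand I would apply the two adjustment lemmas. Since $\nu_1([0,c)) > \delta$, Lemma \ref{lemma:shift to smaller c} (applied to $\nu_1$) yields $\bar c < c$ with $\nu_1([\bar c, c)) > 0$ and $\nu_1([0,\bar c]) > \delta$. Since $\nu_2([0,c)) < \delta$ and $\nu_2(\{c\}) = 0$, Lemma \ref{lem:increase c} (applied to $\nu_2$) yields $\tilde c > c$ with $\nu_2((c,\tilde c]) > 0$ and $\nu_2([0,\tilde c)) < \delta$. Setting $a := \bar c$ and $b := \tilde c$ gives $a < c < b$, so $a < b$; moreover $[a,b] \supseteq [\bar c, c)$ forces $\nu_1([a,b]) > 0$ and $[a,b] \supseteq (c, \tilde c]$ forces $\nu_2([a,b]) > 0$. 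Finally $\nu_2([0,b)) = \nu_2([0,\tilde c)) < \delta < \nu_1([0,\bar c]) = \nu_1([0,a])$, which is exactly the desired alternative.

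The main obstacle is the threshold selection in the second paragraph: one must pass from the soft fact that the two CDFs merely \emph{differ somewhere} to a \emph{strictly separating} pair $(c,\delta)$ located at a non-atom of $\nu_2$ inside the open interval, because only then are the hypotheses of \emph{both} lemmas---the strict mass inequalities on $[0,c)$, the constraint $c \in (0,1)$, and $\nu_2(\{c\}) = 0$---simultaneously available. It is worth noting that mutual singularity is used here only to guarantee $\nu_1 \neq \nu_2$; the quantitative content is carried entirely by the right-continuity/limiting argument and by Lemmas \ref{lemma:shift to smaller c} and \ref{lem:increase c}.
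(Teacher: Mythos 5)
Your proof is correct: the threshold-perturbation step is sound (monotonicity and continuity from above give the persistence of the strict gap $\nu_2([0,c))<\nu_1([0,c))$ on a right neighbourhood of $c_0$, and countability of atoms lets you pick $c$ there with $\nu_2(\{c\})=0$), the hypotheses of Lemma \ref{lemma:shift to smaller c} (applied to $\nu_1$) and Lemma \ref{lem:increase c} (applied to $\nu_2$) are all verified, and the assembly $a=\bar c<c<\tilde c=b$ yields one of the two alternatives. Your route is genuinely different from the paper's in how the point $c$ and the non-atomicity are obtained. The paper keeps the original point $c$ where $\nu_1([0,c])\neq\nu_2([0,c])$ and uses mutual singularity a \emph{second} time: two mutually singular measures cannot both have an atom at $c$, so $\nu_1(\{c\})=0$ or $\nu_2(\{c\})=0$, and the argument then proceeds by a case analysis (Case 1, $\nu_2(\{c\})=0$, itself splits into two subcases depending on whether $\nu_1([\bar c,c])>0$; Case 2 handles $\nu_1(\{c\})=0$ with $\nu_2(\{c\})>0$). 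You instead move the threshold slightly to the right onto a non-atom of the measure with the smaller CDF value and insert a level $\delta$ strictly between the two CDF values, which lets you apply both lemmas at the \emph{same} point $c$, one to each measure, with no case distinctions. What your approach buys: a single uniform argument and a slightly stronger statement, since you only use $\nu_1\neq\nu_2$; thus the Mass ordering principle holds for any two \emph{distinct} probability measures on $[0,1]$, singularity being needed only where the proposition is applied (in the proof of Theorem \ref{thm:main result}, ergodicity forces distinct invariant measures to be mutually singular). What the paper's approach buys: it works directly at the original witness point $c$, with mutual singularity replacing your perturbation-plus-countability step, at the price of the case-by-case analysis.
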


\begin{proof}  Since $\nu_1$ and $\nu_2$ are mutually singular, $\nu_1\neq \nu_2$. Hence, there exists $c\in(0,1)$ such that $\nu_1\bigl([0,c]\bigr) \neq \nu_2\bigl([0,c]\bigr)$. Say, $\nu_1\bigl([0,c]\bigr) < \nu_2\bigl([0,c]\bigr)$ (otherwise relabel). Since $\nu_1$ and $\nu_2$ are mutually singular, one must have $\nu_1(\{c\})=0$ or $\nu_1(\{c\})=0$ (or both).\\
Case 1: $\nu_2(\{c\})=0$.\\
Then $0<\nu_2\bigl([0,c]\bigr) = \nu_2\bigl([0,c)]\bigr)$. So, Lemma \ref{lemma:shift to smaller c} yields $0\leq\bar{c}<c$ such that $0<\nu_2\bigl([\bar{c},c)\bigr) = \nu_2\bigl([\bar{c},c]\bigr)$ and
\begin{equation}\label{eq:estimate nu1 nu2}
	\nu_1\bigl( [0,c) \bigr)\ <\  \nu_1\bigl([0,c]\bigr)\ <\ \nu_2\bigl([0,\bar{c}]\bigr).
\end{equation}
If $\nu_1\bigl([\bar{c},c]\bigr) >0$, then we can take $a=\bar{c}$ and $b=c$. If $\nu_1\bigl([\bar{c},c]\bigr) =0$, then in particular $\nu_1(\{c\})=0$. Inequalities \eqref{eq:estimate nu1 nu2} and Lemma \ref{lem:increase c} yields $c<\tilde{c}\leq 1$ such that $\nu_1\bigl((c,\tilde{c}]\bigr)>0$ and $\nu_1\bigl([0,\tilde{c})\bigr) < \nu_2\bigl([0,\bar{c}]\bigr)$. Define $a:= \bar{c}$ and $b:=\tilde{c}$. Then these satisfy the properties in the statement of the proposition.\\
Case 2:  $\nu_1(\{c\})=0$ (and $\nu_2(\{c\})>0$).\\
We have $\nu_1\bigl([0,c)\bigr)<\nu_2\bigl([0,c]\bigr)$ and $\nu_1(\{c\})=0$ in this case. So, Lemma \ref{lem:increase c} yields $c<\tilde{c}\leq 1$ such that $\nu_1\bigl((c,\tilde{c}]\bigr)>0$ and $\nu_1\bigl([0,\tilde{c})\bigr)<\nu_2\bigl([0,c]\bigr)$. Define $a:=c$ and $b:=\tilde{c}$. Then $\nu_2([a,b])\geq \nu_2(\{c\})>0$ and $a,b$ have the desired properties.
\end{proof}

\begin{lem}\label{lem: 0 in supp} Let $\nu$ be a $\mu$-invariant Borel probability measure on $[0,1]$ and assume that $\Gamma=\supp(\mu)$.
If $(\mathfrak{U}_1)$, then  $0\in \supp(\nu)$. Analogously,  $(\mathfrak{U}_2)$ implies that $1\in \supp(\nu)$.
\end{lem}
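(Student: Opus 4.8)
The plan is to argue by contradiction, exploiting that under $(\mathfrak{U}_1)$ the dynamics pushes interior points strictly to the left, so a $\mu$-invariant measure cannot leave a gap at the left endpoint. Set $c := \inf\supp(\nu)$ and suppose $c>0$; I will also take $c<1$, which is the relevant case (it holds whenever $\nu$ charges $(0,1)$, the situation in which the lemma is applied). Since $\supp(\nu)$ is closed, $c\in\supp(\nu)$ and $\nu\bigl([0,c)\bigr)=0$. The goal is to contradict this by producing mass below $c$.

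First I would record the invariance identity applied to the Borel set $[0,c)$:
$$
\nu\bigl([0,c)\bigr)=\int_{\Gamma}\nu\bigl(g^{-1}([0,c))\bigr)\,\d\mu(g).
$$
Because $c\in(0,1)$, $(\mathfrak{U}_1)$ furnishes some $g_0\in\Gamma$ with $g_0(c)<c$. Consider the set $V:=\{g\in C([0,1]):g(c)<c\}$. Evaluation at $c$ is $1$-Lipschitz for the supremum norm, so $V$ is open and $g_0\in V$. Since $g_0\in\Gamma=\supp(\mu)$ and $V$ is an open neighbourhood of $g_0$, we obtain $\mu(V)>0$.

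Next I would show the integrand above is strictly positive on all of $V$. Fix $g\in V$. By continuity of $g$ there is $\rho_g>0$ with $g(x)<c$ for every $x\in(c-\rho_g,c+\rho_g)$, whence $(c-\rho_g,c+\rho_g)\subset g^{-1}([0,c))$. As $c\in\supp(\nu)$, this neighbourhood has positive $\nu$-measure, so $\nu\bigl(g^{-1}([0,c))\bigr)>0$ for every $g\in V$. A nonnegative measurable function that is strictly positive on the set $V$ of positive $\mu$-measure has strictly positive integral, so the displayed identity forces $\nu\bigl([0,c)\bigr)>0$, contradicting $\nu\bigl([0,c)\bigr)=0$. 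Hence $c=0$, i.e. $0\in\supp(\nu)$. The claim for $(\mathfrak{U}_2)$ follows by the mirror argument, with $c:=\sup\supp(\nu)$, the open set $\{g:g(c)>c\}$, and the Borel set $(c,1]$.

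The main obstacle is that $(\mathfrak{U}_1)$ only supplies a \emph{single} witness $g_0\in\Gamma$, which could be a $\mu$-null point, so invariance alone (which only perceives $\mu$-a.e.\ $g$) cannot be invoked at $g_0$ directly. The resolution, and the precise place where the hypothesis $\Gamma=\supp(\mu)$ is essential, is to pass from the point $g_0$ to the open set $V$ of maps moving $c$ strictly to the left: this set has positive $\mu$-measure and, by continuity, the entire integrand is positive on it. A routine technical point to check is the measurability of $g\mapsto\nu\bigl(g^{-1}([0,c))\bigr)$, which is already implicit in the definition of $\mu$-invariance.
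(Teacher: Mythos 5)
Your proof is correct and follows essentially the same route as the paper's: take the minimal point $c$ of $\supp(\nu)$, use $\Gamma=\supp(\mu)$ together with openness of $\{g : g(c)<c\}$ in the supremum norm to get a set of maps of positive $\mu$-measure, note that for each such $g$ the open set $g^{-1}([0,c))$ is a neighbourhood of $c\in\supp(\nu)$ and hence has positive $\nu$-measure, and contradict $\nu([0,c))=0$ via the invariance identity. Your explicit caveat that one must have $c<1$ (because $(\mathfrak{U}_1)$ only concerns $x\in(0,1)$) is a point the paper glosses over --- its proof silently restates $(\mathfrak{U}_1)$ as holding for all $x\in(0,1]$ --- so your treatment of the edge case $\nu=\delta_1$ is, if anything, slightly more careful.
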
    
\begin{proof} Assume $(\mathfrak{U}_1)$: for every $x\in (0,1]$ there is some function $g\in\Gamma$ such that  $ g(x)<x.$
Since $\Gamma=\supp(\mu)$, we obtain  for every $ x\in(0,1]$ that 
$$
\mu(\{g\in\Gamma\, : \, g(x)<x\})>0.$$ 
Take $z\in \supp(\nu)$ minimal. If $z=0$, the proof of the first statement is completed. If $z>0$, then there exists $g\in \Gamma$ such that $g(z)<z$. Moreover, $$
\mu(\{g\in\Gamma\, : \, g(z)<z\})>0.
$$ 
Further, $\nu([0, z))=0$. On the other hand, by $\mu$-invariance, we see that 
$$\nu([0, z))=\int_{\Gamma} \nu(g^{-1}([0, z)))\, d\mu(g)
\geq 
    \int_{\{g\in\Gamma\, : \, g(z)<z\}}\nu(g^{-1}([0, z)))d\mu(g) >0,
    $$ 
    because $\nu(g^{-1}([0, z)))>0$ due to the fact that 
    $g^{-1}([0, z))$ is an open neighbourhood of $z\in\supp(\nu)$ for every
   $g\in  \{g\in\Gamma\, : \, g(z)<z\}.$ Since this is impossible, $z=0$.

The second statement is proven similarly.
\end{proof}

\section{Proofs of Theorem~\ref{thm:main result} and Corollary~\ref{clry:main application} \& \ref{cor1_16.09.68}}\label{sec: main}
To prove the main theorem let us collect some lemmas.

\begin{lem}\label{l1_10.08.22}
	Let $(\Gamma, \mu)$ be a stochastic dynamical system on $C([0, 1])$. Suppose that there exist two different measures $\nu_1, \nu_2\in\mathcal P([0,1])$, concentrated on $(0,1)$ satisfying
	\[
	\{0,1\}\cap\supp\nu_1\cap\supp\nu_2\neq\emptyset.
	\] 
	Then there exist sequences $(\tilde z_n)_{n\ge 1}$ and $(\hat z_n)_{n\ge 1}$ such that the open intervals $Z_n$ with ends $\tilde z_n$ and $\hat z_n$ are pairwise disjoint, and $\nu_1(Z_n)>0$ and $\nu_2(Z_n)>0$ for $n\ge 1$. 
\end{lem}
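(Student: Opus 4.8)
The plan is to reduce to a single common endpoint and then build the intervals by a simple greedy recursion marching towards that endpoint. Note first that $(\Gamma,\mu)$ plays no role in the conclusion, so the whole statement is really about $\nu_1,\nu_2$ alone. Applying the reflection $x\mapsto 1-x$, which exchanges $0$ and $1$, preserves supports, and maps a probability measure concentrated on $(0,1)$ to another such measure, I may assume the common point is $0$, i.e. $0\in\supp\nu_1\cap\supp\nu_2$. Since both $\nu_i$ are concentrated on $(0,1)$ we have $\nu_i(\{0\})=0$, so the defining property of the support yields
\[
\nu_i\bigl((0,\eps)\bigr)=\nu_i\bigl([0,\eps)\bigr)>0\qquad(i=1,2)
\]
for every $\eps>0$. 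This is the only structural input I will use: both measures charge every right-neighbourhood of $0$.

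Next I would construct the intervals recursively, keeping a strictly decreasing sequence of cut points $c_n>0$ with the running property $\nu_i\bigl((0,c_n)\bigr)>0$ for $i=1,2$ (which, by the previous step, holds for any positive $c_n$). Start from an arbitrary $c_1\in(0,1)$. Given $c_n$, pick support points $p_1\in\supp\nu_1\cap(0,c_n)$ and $p_2\in\supp\nu_2\cap(0,c_n)$, which exist because an open set of positive $\nu_i$-measure must meet $\supp\nu_i$. With $m=\min(p_1,p_2)$ and $M=\max(p_1,p_2)$, set
\[
\tilde z_n:=\tfrac{m}{2},\qquad \hat z_n:=\tfrac{M+c_n}{2},\qquad Z_n:=(\tilde z_n,\hat z_n).
\]
Then $0<\tilde z_n<m$ and $M<\hat z_n<c_n$, hence $p_1,p_2\in[m,M]\subset Z_n\subset(0,c_n)$; as $Z_n$ is an open neighbourhood of both $p_1$ and $p_2$, the definition of support forces $\nu_1(Z_n)>0$ and $\nu_2(Z_n)>0$. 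Finally put $c_{n+1}:=\tilde z_n$, which is positive and strictly less than $c_n$, and iterate.

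Pairwise disjointness then follows from the nesting built into the recursion: for $k>n$ the interval $Z_k$ lies in $(0,c_k)$ with $c_k\le c_{n+1}=\tilde z_n$, whence $Z_k\subset(0,\tilde z_n)$, while $Z_n=(\tilde z_n,\hat z_n)$; hence $Z_k\cap Z_n=\emptyset$. This produces the required sequences $(\tilde z_n)$ and $(\hat z_n)$.

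I do not anticipate a genuine obstacle: the statement merely packages the fact that both measures accumulate at the common boundary point, and the auxiliary Lemmas \ref{lemma:shift to smaller c}--\ref{lem:increase c} are not needed for it (one could rephrase the bookkeeping through them, but the elementary enclosure above is shorter). The two points deserving care are the reduction to the endpoint $0$ via the reflection, and the explicit use of the hypothesis that each $\nu_i$ is concentrated on $(0,1)$: without $\nu_i(\{0\})=0$ one could not guarantee positive mass in the open neighbourhoods $(0,\eps)$, which is precisely what drives the construction.
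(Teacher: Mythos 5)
Your proof is correct and follows essentially the same strategy as the paper's: reduce to the common endpoint $0$, then recursively construct intervals marching down to $0$, each charged by both measures, with pairwise disjointness forced by the nesting $Z_{k}\subset(0,\tilde z_n)$ for $k>n$. The only difference is the micro-step guaranteeing $\nu_i(Z_n)>0$ --- the paper uses continuity of measures along a sequence $y_k\downarrow 0$ to find a single cut point working for both measures, while you enclose one support point of each measure in a common open interval --- and both are valid elementary arguments.
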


\begin{proof} 
 Suppose $0 \in \supp\nu_1\cap\supp\nu_2$. The case when $1$ is in the intersection can be proven similarly.
	   Let $\hat{z}_1\in (0,1]$ be arbitrary. 
Since $0\in\supp(\nu_i)$ and $\nu_i$ are concentrated on $(0,1)$, we obtain $\nu_i((0,\hat{z}_1))>0$ for $i=1,2$. Now take a sequence $y_n\in (0,1)$ converging to $0$. Then there exists an element $y_k$ in this sequence such that $\nu_i((y_k,\hat{z}_1))>0$ for $i=1,2$. Indeed, if not, i.e. $\nu_i((y_n,z_1))=0$ for all $n\in\mathbb{N}$, then by continuity of measures, we obtain $\nu_i((0,\hat{z}_1))=\lim\limits_{n\to \infty} \nu_i(\bigcup_{m=1}^n (y_m,\hat{z}_1))=0$, which violates our assumptions. Set $y_k=:\tilde{z}_1$ and $Z_1=(\tilde{z}_1,\hat{z}_1)$. Now, consider the open neighbourhood $[0,\tilde{z}_1)$ of $0$. Again, $\nu_i((0,\tilde{z}_1))>0$ since $\nu_i$ are concentrated on $(0,1)$. Thus we can repeat the procedure: Set $\tilde{z}_1=:\hat{z}_2$ and find $\tilde{z}_2$ such that $\nu_i((\tilde{z}_2,\hat{z}_2))>0$. Continue like this to find $(\tilde{z}_n)_{n\geq 1}$ and $(\hat{z}_n)_{n\geq 1}$.
\end{proof}

 In the consequences of Lemma \ref{l1_10.08.22} we can achieve:
 
\begin{lem}\label{lem:alternative for Zn}
	Let $(\Gamma, \mu)$ be a stochastic dynamical system on $C([0, 1])$.
	Suppose that there exist two different $\mu$--invariant ergodic Borel probability measures $\nu_1, \nu_2\in\mathcal P$ satisfying
	\[
	\supp\nu_1\cap\supp\nu_2\neq\emptyset.
	\] 
If there is no atomic $\mu$--invariant measure with finite support in the open interval $(0, 1)$,
 then there exist sequences $(\tilde z_n)_{n\ge 1}$ and $(\hat z_n)_{n\ge 1}$ such that the intervals $Z_n$ with ends $\tilde z_n$ and $\hat z_n$ are pairwise disjoint, and $\nu_1(Z_n)>0$ and $\nu_2(Z_n)>0$ for $n\ge 1$. 
\end{lem}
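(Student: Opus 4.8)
The plan is to use $\mu$-invariance to reduce the statement to a dichotomy about the common support $C:=\supp\nu_1\cap\supp\nu_2$, which is nonempty by assumption, and then to peel off intervals exactly as in the proof of Lemma~\ref{l1_10.08.22}. I would first record the structural fact that each support is forward invariant under the whole of $\Gamma$, i.e.\ $g(\supp\nu_i)\subseteq\supp\nu_i$ for every $g\in\Gamma$. This is read off from $\nu_i=\int_\Gamma g\nu_i\,\d\mu(g)$: writing $U:=[0,1]\setminus\supp\nu_i$, the identity $0=\nu_i(U)=\int_\Gamma\nu_i(g^{-1}(U))\,\d\mu(g)$ forces $\nu_i(g^{-1}(U))=0$, hence $\overline{g(\supp\nu_i)}\subseteq\supp\nu_i$, for $\mu$-almost every $g$; since this inclusion is preserved under uniform limits and $\Gamma=\supp\mu$, it holds for every $g\in\Gamma$. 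Consequently $C$ is a nonempty closed set with $g(C)\subseteq C$ for all $g\in\Gamma$.

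The easy branch is when $C$ is infinite. Being a closed subset of $[0,1]$ it then has an accumulation point, so I can pick a strictly monotone sequence $(q_n)$ of distinct points of $C$ together with radii $r_n$ so small that the intervals $Z_n:=(q_n-r_n,q_n+r_n)$ are pairwise disjoint; each $Z_n$ is a neighbourhood of the common support point $q_n$, whence $\nu_1(Z_n)>0$ and $\nu_2(Z_n)>0$. More generally, it is enough to produce a single point $p$ such that, on one fixed side, both $\nu_1$ and $\nu_2$ charge every one-sided neighbourhood of $p$: the nested peeling used for Lemma~\ref{l1_10.08.22} then yields the required disjoint intervals. Any accumulation point of $C$ automatically has this property, which is why the infinite case is immediate.

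The substance of the proof, and the step I expect to be the main obstacle, is the remaining degenerate case in which $C$ is finite and no such point $p$ exists; this is precisely where the hypothesis on finitely supported atomic invariant measures must be used. Forward invariance turns $C$ into an invariant finite set, so the prescription ``move from $x$ to $g(x)$ with $g$ distributed according to $\mu$'' defines a Markov chain on the finite state space $C$, and any stationary distribution $\pi$ of this chain is a finitely supported atomic $\mu$-invariant measure carried by $C$, since $\pi(\{y\})=\sum_{x\in C}\pi(\{x\})\,\mu(\{g:g(x)=y\})=\int_\Gamma\pi(g^{-1}(\{y\}))\,\d\mu(g)$. Choosing $\pi$ concentrated on a single recurrent class and invoking the hypothesis shows that its support cannot lie in $(0,1)$, so a boundary point must belong to $C$. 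Disposing of this boundary alternative is the delicate part: a boundary common support point that is not an atom of either $\nu_i$ is a point at which both measures charge all one-sided neighbourhoods, so Lemma~\ref{l1_10.08.22}'s peeling again applies and finishes the proof; the genuinely resistant sub-case is a boundary atom, which I would exclude by extracting from the atomic part of the $\nu_i$ a finitely supported invariant measure sitting inside $(0,1)$, once more contradicting the hypothesis.
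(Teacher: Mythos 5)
Your overall architecture tracks the paper's proof closely: the paper also passes to a forward-invariant subset of the common support (it uses the forward orbit of a single common support point under $\mu$-a.e.\ $g$, where you use the full intersection $C$, invariant under \emph{every} $g\in\Gamma$ --- a cleaner fact, and your proof of it is correct), also splits on whether this set is finite or infinite, also produces in the finite case a finitely supported $\mu$-invariant measure to play against the hypothesis (the paper via compactness, you via an explicit finite-state Markov chain), and also finishes the infinite case with disjoint intervals around a monotone sequence of common support points. Everything up to and including your sub-case ``boundary point of $C$ that is an atom of neither $\nu_i$'' is sound, and in places more careful than the paper's own write-up.

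The genuine gap is your final sub-case --- a boundary point of $C$ that is an atom of one $\nu_i$ --- and it cannot be closed, because the lemma as literally stated is false there; in particular the proposed ``extraction from the atomic part'' cannot produce a finitely supported invariant measure inside $(0,1)$. Concretely, take $\Gamma=\{g_1,g_2\}$ with $\mu(\{g_1\})=\mu(\{g_2\})=1/2$, where $g_2(x)=(x+1)/2$, and $g_1\equiv 0$ on $[0,3/4]$, $g_1(x)=4x-3$ on $[3/4,1]$. Both maps fix $1$, so $\nu_1=\delta_1$ is ergodic and $\mu$-invariant. The set $\{z_k\}_{k\ge 0}$ with $z_k=1-2^{-k}$ is invariant ($g_2(z_k)=z_{k+1}$, $g_1(z_k)=z_{\max(k-2,0)}$), and the induced chain on it is irreducible with negative drift, hence positive recurrent; its stationary distribution defines an ergodic $\mu$-invariant $\nu_2$ with $\supp\nu_2=\{z_k\}_{k\ge0}\cup\{1\}$ and $\nu_2(\{1\})=0$. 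Any finite set forward-invariant under both maps must equal $\{1\}$, since every point of $[0,1)$ has an infinite strictly increasing $g_2$-orbit; so there is no atomic $\mu$-invariant measure with finite support in (or even meeting) $(0,1)$. Yet $\supp\nu_1\cap\supp\nu_2=\{1\}\neq\emptyset$, and every interval charged by $\nu_1=\delta_1$ must contain the point $1$, so no two disjoint intervals charged by both measures exist: the conclusion fails. This is exactly your resistant sub-case ($C=\{1\}$ finite, boundary atom of $\nu_1$), and it shows no argument can dispose of it --- here the only finitely supported invariant measure is $\delta_1$, whose support is not in $(0,1)$, so the hypothesis is never contradicted. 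What rescues the paper is that $\nu_1,\nu_2$ are tacitly assumed concentrated on $(0,1)$: this hypothesis appears in Lemma \ref{l1_10.08.22} and in Theorem \ref{thm:main result}(\textit{ii}) where the present lemma is applied, but was dropped from the statement of Lemma \ref{lem:alternative for Zn}; the paper's own proof uses it silently by deferring the boundary case to Lemma \ref{l1_10.08.22}. Under that extra assumption neither measure can have a boundary atom, your sub-case is vacuous, and your proof is complete; without it, none can be.
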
	

\begin{proof}
	Choose $z\in\supp\nu_1\cap\supp\nu_2$.  If $z\in\{0, 1\}$, then Lemma \ref{l1_10.08.22} applies and the condition about atomic $\mu$-invariant measures is not needed to draw the conclusion.
	Thus, consider the case $0<z<1$. Since $\nu_1$ and $\nu_2$ are $\mu$--invariant, there exists a measurable set $\widetilde{\Gamma}\subseteq \Gamma$ (depending on $\nu_1,\nu_2$) with $\mu(\widetilde{\Gamma})=1$ such that $g(\supp \nu_1 \cap \supp \nu_2)\subseteq \supp \nu_1 \cap \supp \nu_2$ for all $g\in \widetilde{\Gamma}$. 
	
	Therefore, we have 
	$$
	\varUpsilon:= \bigcup_{n=1}^{\infty} \widetilde{\Gamma}^n(z)\subseteq \supp \nu_1 \cap \supp \nu_2.
	$$ 
	Moreover, 
  we have $\# \varUpsilon=\infty$ by the assumption that there is no
	atomic $\mu$--invariant measure $\nu\in\mathcal P$ with finite support intersecting
	the open interval $(0, 1)$.  Indeed, if $\varUpsilon$ were finite, then by compactness there would be a $\mu$--invariant Borel probability measure on $\varUpsilon$, and its support intersects $(0,1)$ non-trivially and is finite, which contradicts the assumptions.  Obviously 	$\varUpsilon\subset\supp\nu_i$ for $i=1, 2$. 
 Let $(w_n)_{n\ge 1}$ be a sequence of points in $\varUpsilon$. Without loss of generality we may assume that this sequence is monotonic. Let $r_n:=\min \{|w_n-w_{n-1}|/3, |w_n-w_{n+1}|/3\}$ for $n\ge 2$ and $r_1=|w_1-w_2|/3$. Then, obviously, we have $B(w_i, r_i)\cap B(w_j, r_j)=\emptyset$ for $i\neq j$. Again, since $\nu_i(B(w_n, r_n))>0$ for $i=1, 2$ and $n\in\mathbb N$, we may choose $\tilde z_n, \hat z_n\in B(w_n, r_n)$ such that the interval $Z_n$ with ends $\tilde z_n, \hat z_n$ satisfies $\nu_1(Z_n)>0$ and $\nu_2(Z_n)>0$ for $n\in\mathbb N$.
\end{proof}

Finally, let us consider the proof of Theorem \ref{thm:main result}.

\begin{proof}[Proof of Theorem \ref{thm:main result}]
 ({\it i})
	The proof will rely on Lemma \ref{11_1.08.22}. 
	Supposing that, contrary to our claim, there exist two different $\mu$--invariant ergodic measures $\nu_1, \nu_2\in\mathcal P$  satisfying
	$\{0,1\}\cap\supp\nu_1\cap\supp\nu_2\neq\emptyset$, we obtain a contradiction. Namely, 
	we shall prove that $\nu_1, \nu_2$ are not mutually singular, contradicting the well known fact that $\nu_1$ and $\nu_2$ must be mutually singular.	
	Fix an $\epsilon>0$.  From Proposition \ref{l1_3.08.22} it follows that there exist $a, b\in [0, 1]$, $a<b$, such that, say, 
	$$
	\nu_1([0, b))<\nu_2([0, a])
	$$
	and
	$$
	\nu_1([a, b])>0\qquad\text{and}\qquad
	\nu_2([a, b])>0.
	$$
	Set
	$$
	\delta:=\nu_1([a, b])\wedge\nu_2([a, b]).
	$$
 Let us denote for $\omega=(g_1,g_2,\dots)\in \Gamma^\mathbb N$ 
\[
V_n(\omega) := \bigvee_0^1 g_1g_2\dots g_n.
\]	
	Then condition~\hyperref[BV]{(BV)} implies that there exists $K>0$ such that
	$$
	\beta:=\mu^{\otimes \mathbb N}(\omega\in\Gamma^{\mathbb N}: \sup_{n\ge 1} V_n(\omega)<K)>0.
	$$
	Let $\tilde\epsilon=\beta\epsilon{/2}$. Choose $M\in\mathbb N$ such that
	$M>K/\tilde\epsilon$, and let $N\in\mathbb N$ be such that $N\gamma>M$, where
	$$
	\gamma:=
	(\nu_2([0, a])-\nu_1([0, b)))/2>0.
	$$
	By Lemma \ref{l1_10.08.22} we may choose pairwise disjoint intervals $Z_1,\ldots, Z_N$ such that $\nu_1(Z_j)$ $>0$ and $\nu_2(Z_j)>0$ for $j=1,\ldots, N$. Now, by the Individual Birkhoff Ergodic Theorem we choose points
	$x_j, y_j\in Z_j$ such that
	$$
	\lim_{m\to\infty}\frac{1}{m}\sum_{i=1}^m {\bf 1}_{[0, b)} (g_i\cdots g_1(x_j))=\nu_1([0, b))\quad\text{$\mu^{\otimes \mathbb N}$--a.s.}
	$$
	and
	$$
	\lim_{m\to\infty}\frac{1}{m}\sum_{i=1}^m {\bf 1}_{[0, a]} (g_i\cdots g_1(y_j))=\nu_2([0, a])\quad\text{for $j\in\{1,\ldots, N\}$, $\mu^{\otimes \mathbb N}$--a.s. }
	$$
	These equations, in turn, give
	$$
	\lim_{m\to\infty}\frac{1}{m}\sum_{i=1}^m ({\bf 1}_{[0, a]} (g_i\cdots g_1(y_j))-{\bf 1}_{[0, b)} (g_i\cdots g_1(x_j)))=\nu_2([0, a])-\nu_1([0, b))
	$$
	for $j\in\{1,\ldots, N\}$, 
	$\mu^{\otimes \mathbb N}$--a.s.
	Consequently, there exists $n\in\mathbb N$, $n\geq N$, such that
	\begin{equation}\label{e1_11.08.22}
		\frac{1}{n}\sum_{i=1}^n ({\bf 1}_{[0, a]} (g_i\cdots g_1(y_j))-{\bf 1}_{[0, b)} (g_i\cdots g_1(x_j)))>\gamma
	\end{equation}
	for every $j\in\{1,\ldots, N\}$ and every $\omega\in\hat\Omega\subset \Gamma^n$ with $\mu^{\otimes n}(\hat\Omega)>1-\beta/2$.
	Further, we have
	$$
	\begin{aligned}
		\mu^{\otimes n}(\{\omega\in\Gamma^{n}: &  \bigvee_{0}^1g_n\cdots g_1<K\})\ =
		\ \mu^{\otimes n}(\{\omega\in\Gamma^{n}:  \bigvee_{0}^1g_1\cdots g_n<K\})\\
		&\ge\ \mu^{\otimes \mathbb N}(\{\omega\in\Gamma^{\mathbb N}: \sup_{n\ge 1} V_n(\omega)<K\})\ =\ \beta
	\end{aligned}
	$$
	and, thus, we obtain
	$$
	\mu^{\otimes n}(\omega\in\hat\Omega:  \bigvee_{0}^1g_n\cdots g_1<K)\ge\beta/2.
	$$
	Set
	$$
	\Omega:=\{\omega\in\hat\Omega:  \bigvee_{0}^1g_n\cdots g_1<K\}.
	$$
	From (\ref{e1_11.08.22}) it follows that for any $\omega\in\Omega$ and $j\in\{1,\ldots, N\}$ the set 
	$$
	I(j, \omega):=\{1\le i\le n: g_i\cdots g_1(y_j)\le a<b\le g_i\cdots g_1(x_j) \}
	$$
	 has $n_j$ elements with $n_j\geq \gamma\cdot n$. Since $n$ and $N$ had been chosen such that $n\geq N$ and $N\gamma>M\geq 1$, we get $1\leq M < n_j\leq n$. Let $i^{(j)}_1,\dots, i^{(j)}_{n_j}$ be an enumeration of $I(j,\omega)$. Concatenate these into a string of numbers
		\[
		s:=\bigl( i^{(1)}_1,\dots i^{(1)}_{n_1},\ \dots\ , i^{(N)}_1,\dots, i^{(N)}_{n_N}\bigr).
		\]
		Then, $s$ has length $\sum_{j=1}^N n_j \geq N\cdot n\gamma> nM$, while each entry of $s$ has been chosen from $n$ numbers. Therefore, $s$ must contain at least $M$ numbers which are equal. For a fixed $j$, all $i^{(j)}_k$ are distinct. Hence, for any $\omega\in\Omega$ there exists $1\le i< n$ and distinct $j_1(\omega),\dots j_M(\omega)\in\{ 1,\dots,N\}$ such that we have
		$$
		g_i\cdots g_1(x_{j_l(\omega)})\geq b\quad \mbox{and}\quad g_i\cdots g_1(y_{j_l(\omega)})\leq a\quad\text{for all $l=1,\ldots, M$.}
		$$
	Consequently, 
	$$
	|g_n\cdots g_{i+1}([a, b])|<\tilde\epsilon.
	$$
	If not, by the obvious property of the variation, 
	we would obtain 
	$$
	\bigvee_{0}^1g_n\cdots g_1\ge\sum_{l=1}^M
	\bigvee_{x_{j_l(\omega)}}^{y_{j_l(\omega)}} g_n\cdots g_1\ge 
	M |g_n\cdots g_{i+1} ([a, b])|\ge M\tilde\epsilon>K
	\quad\text{for $\omega\in\Omega$,}
	$$
contrary to the definition of the set $\Omega$.
	
	\smallskip
	We are now in a position to define the measures $\nu_1^{\epsilon}$ and $\nu_2^{\epsilon}$. For any $\omega=(g_1, \ldots, g_n)\in\Omega$ we define $T(\omega)=\omega'=(g_n,\ldots, g_l)$, where
	$l$ is the largest integer strictly smaller than $n$ such that 
	$$
	|g_n\cdots g_l ([a, b])|<\tilde\epsilon.
	$$
	
	Set
	$$
	\Omega_i=T(\Omega)\cap \Gamma^i\qquad\text{for $i=1, \ldots, n$}.
	$$
	We easily see that for $i\neq j$ the sets $\Omega_i\times\Gamma^{\mathbb N}$ and $\Omega_j\times\Gamma^{\mathbb N}$ are disjoint and therefore we have
	$$
	\sum_{j=1}^n\mu^{\otimes j}(\Omega_j)=
	\sum_{j=1}^n\mu^{\otimes \mathbb N}(\Omega_j\times\Gamma^{\mathbb N})\ge\mu^{\otimes n}(\Omega)\ge\beta/2.
	$$
	Finally, define 
	$$
	\tilde\nu_i^{\epsilon}(A): =\sum_{j=1}^n\int_{\Omega_j}\frac{\nu_i(g^{-1}(A)\cap [a, b])}{\nu_i([a, b])}\mu^{\otimes i}(\d g)\quad\text{for a Borel set $A$ and $i=1, 2$,}
	$$
	and observe that by the fact that $\nu_1$, $\nu_2$ are $\mu$--invariant measures we obtain
	\begin{equation}\label{e1_12.08.22}
		\begin{aligned}
			\tilde\nu_i^{\epsilon}(A)&=\sum_{j=1}^n\int_{\Omega_i}\frac{\nu_i(g^{-1}(A)\cap [a, b])}{\nu_i([a, b])}\mu^{\otimes j}(\d g)
			\le \sum_{j=1}^n\int_{\Omega_j}\frac{\nu_i(g^{-1}(A))}{\nu_i([a, b])}\mu^{\otimes j}(\d g)\\
			&=\sum_{j=1}^n\int_{ \Omega_j\times\Gamma^{n-j}}\frac{\nu_i(g^{-1}(A))}{\nu_i([a, b])}\mu^{\otimes n}(\d g)
			=\int_{\bigcup_{j=1}^n \Omega_j\times\Gamma^{n-j}}\frac{\nu_i(g^{-1}(A))}{\nu_i([a, b])}\mu^{\otimes n}(\d g)\\
			&\le
			\int_{\Gamma^{n}}\frac{g_*\nu_i(A)}{\nu_i([a, b])}\mu^{\otimes n}(\d g)
			=\frac{\nu_i(A)}{\nu_i([a, b])}\quad\qquad\text{for any Borel set $A$ and $i=1, 2$.}
		\end{aligned}
	\end{equation}
	We see 
	\begin{equation}\label{e1_13.08.22}
		\begin{aligned}
			\tilde\nu_i^{\epsilon}([0, 1]): &=\sum_{j=1}^n\int_{\Omega_j}\frac{\nu_i(g^{-1}([0, 1])\cap [a, b])}{\nu_i([a, b])}\mu^{\otimes j}(\d g)\\
			&=\sum_{j=1}^n\int_{\Omega_j}\frac{\nu_i([a, b])}{\nu_i([a, b])}\mu^{\otimes j}(\d g)= \sum_{j=1}^n\mu^{\otimes j}(\Omega_j)\ge\beta/2
			\quad\text{for $i=1, 2$.}
		\end{aligned}
	\end{equation}
 Note that $\tilde\nu_1^{\epsilon}([0, 1]) = \tilde\nu_2^{\epsilon}([0, 1])$.
	Moreover, for any $1$--Lipschitz function $f$ we have
	\begin{equation}\label{e2_13.08.22}
		\begin{aligned}
			&\left|\int_{[0, 1]} f(x)\tilde\nu_1^{\epsilon}(\d x)-
			\int_{[0, 1]} f(x)\tilde\nu_2^{\epsilon}(\d x)\right|\\
			&\le
			\sum_{i=1}^n\int_{\Omega_i}\left|\int_{[a, b]}f(g(x))\frac{\nu_1(\d x)}{\nu_1([a, b])}-\int_{[a, b]}f(g(x))\frac{\nu_2(\d x)}{\nu_2([a, b])}\right|\mu^{\otimes i}(\d g)\\
			&\le\sum_{i=1}^n\int_{\Omega_i}\left(\int_{[a, b]}\int_{[a, b]} |f(g(x))-f(g(y))|\frac{\nu_1(\d x)}{\nu_1([a, b])}\frac{\nu_2(\d y)}{\nu_2([a, b])}\right)\mu^{\otimes i}(\d g)\\
			&\le\sum_{i=1}^n\int_{\Omega_i}\left(\int_{[a, b]}\int_{[a, b]} |g(x)-g(y)|\frac{\nu_1(\d x)}{\nu_1([a, b])}\frac{\nu_2(\d y)}{\nu_2([a, b])}\right)\mu^{\otimes i}(\d g)
			\le\tilde\epsilon.
		\end{aligned}
	\end{equation}

	Finally, define
	$$
	\nu_i^{\epsilon}(\cdot)=\frac{\tilde\nu_i^{\epsilon}(\cdot)}{
		\tilde\nu_i^{\epsilon}([0, 1])}\qquad\text{for $i=1, 2$}
	$$
	and observe that by (\ref{e1_12.08.22}) and (\ref{e1_13.08.22}) we have
	$$
	\nu_i\ge\alpha\nu_i^{\epsilon}\qquad\text{for $i=1, 2$}
	$$
	with $\alpha=\beta{\delta}/2$ and
	$$
	d_w(\nu_1^{\epsilon}, \nu_2^{\epsilon})<{2}\tilde\epsilon/\beta=\epsilon,
	$$
	by (\ref{e1_13.08.22}) and (\ref{e2_13.08.22}). The application of Lemma \ref{11_1.08.22} completes the proof of ({\it i}).\\

\medskip

({\it ii}) 
Similar as in case ({\it i}), we argue by contradiction. 
	Supposing that contrary to our claim, there exist two distinct $\mu$--invariant ergodic measures $\nu_1, \nu_2\in\mathcal P$ satisfying
	$\supp\nu_1\cap\supp\nu_2\neq\emptyset$, Lemma~\ref{lem:alternative for Zn} instead of Lemma \ref{l1_10.08.22} and the reasoning as in the proof of ({\it i}) shows that $\nu_1, \nu_2$ are not mutually singular, contradicting their ergodicity.
\end{proof}

\begin{proof}[Proof of Corollary~\ref{clry:main application}] Since probability measures are convex combinations of ergodic ones, it suffices to show that there is at most one ergodic $\mu$-invariant probability measure.
Let $\nu_1,\nu_2$ be two $\mu$-invariant ergodic Borel probability measures on $(0,1)$. By Lemma~\ref{lem: 0 in supp}, we obtain that $0\in\supp(\nu_1)\cap\supp(\nu_2)$ or $1 \in\supp(\nu_1)\cap\supp(\nu_2)$. Thus by Theorem~\ref{thm:main result} ({\it i}), we see that $\nu_1=\nu_2$. If moreover, every $\mu$-invariant Borel probability measure gives zero mass to the sets $\{0\}$ and $\{1\}$, then there exists exactly one $\mu$-invariant Borel probability measure on $[0,1]$. Indeed, by compactness, there exists a $\mu$-invariant Borel probability measure $\nu$  on $[0,1]$ and since $\nu(\{0\})=\nu(\{1\})=0$, this measure is concentrated on $(0,1)$. By the above, there can at most be one $\mu$-invariant Borel probability measure on $(0,1)$. This proves unique ergodicity.
\end{proof}

\begin{proof}[Proof of Corollary~\ref{cor1_16.09.68}] To complete the proof it is enough to show that $\nu(\{0\})=\nu(\{1\})=0$ due to  Corollary~\ref{clry:main application}. Assume, contrary to our claim, that $\nu(\{0, 1\})>0$. Since $\nu$ is $\mu$--invariant  and $g((0,1))\subset(0,1)$ for every $g\in\Gamma$ by assumption, we have that $g^{-1}(\{0,1\})$ equals either $\emptyset$, $\{0\}$, $\{1\}$ or $\{0,1\}$ and consequently,
\begin{align*}
\nu(\{0, 1\})\ &=\ \int_{\Gamma} \nu(g^{-1}(\{0, 1\}))\,\mu(\d g)\\
&=\ \int_{\{g\in\Gamma:  g^{-1}(\{0, 1\})= \{0, 1\}\}} \nu(\{0, 1\})\mu(\d g)\\
& \qquad +\ \int_{\bigl\{ g\in\Gamma:  g^{-1}(\{0, 1\})=\{1\}\bigr\} } \nu(\{1\})\mu(\d g)\\
&  \qquad\qquad+\ \int_{ \bigl\{ g\in\Gamma:  g^{-1}(\{0, 1\})=\{0\} \bigr\} } \nu(\{0\})\mu(\d g)\\
& {\leq\ \int_{ \bigl\{g\in\Gamma \colon g(\{0,1\})\subset\{0,1\} \bigr\} } \nu(\{0,1\}) \,\mu(\d g)}\\
&<\ \int_{\Gamma}\nu(\{0, 1\})\d\mu(g)=\nu(\{0, 1\}),
\end{align*}
where the penultimate inequality is due to the fact that the  $\nu(\{0,1\})>0$ and the complement of the set $\bigl\{ g\in\Gamma\colon g(\{0,1\})\subset\{0,1\} \bigr\}$ is non-empty and has positive $\mu$-measure, because it contains $g_0$ and $g_1$ by assumption and $\Gamma=\supp(\mu)$. This leads to a contradiction and the proof is complete.

\end{proof}

\section{Various sufficient conditions and examples}\label{sec: ex}

In this section we shall show that condition \hyperref[BV]{(BV)} can be shown to hold in various examples. First, we construct an example generated by two maps on the unit interval of which one is highly oscillating in regions. It has unbounded variation. However, condition \hyperref[BV]{(BV)} is satisfied due to the particular structure of the other map, that compensates this behaviour. Secondly, we show a construction that allows us to give a large family of examples consisting of $\Gamma$ generated by finitely many piecewise monotone continuous maps. The construction is such that condition \hyperref[BV]{(BV)} is satisfied. By introducing maps that satisfy assumptions ({\it i}) and ({\it ii}) in Corollary \ref{clry:main application}, the existence of a unique $\mu$-invariant measure is then guaranteed by this important corollary. 
	
One family of examples can be immediately provided (see e.g. \cite{AM, CS}):

\subsection{Monotonic maps}  If all $g\in \Gamma$ are monotonic, then
$\bigvee_0^1 g_1\cdots g_n\le 1$ for every $g_1,\ldots, g_n\in\Gamma$ and condition \hyperref[BV]{(BV)} holds. If we additionally assume that both $(\mathfrak{U}_1)$ and $(\mathfrak{U}_2)$  are satisfied, then, according to Corollary \ref{clry:main application}, $(\Gamma, \mu)$ has at most one $\mu$--invariant measure, which is concentrated on $(0, 1)$. Moreover, if there exist $g_0, g_1\in\Gamma$ with $g_0(0)\in (0, 1)$ and $g_1(1)\in (0, 1)$, then $(\Gamma,\mu)$ is uniquely ergodic, by Corollary
\ref{cor1_16.09.68}.

\subsection{$\mu$--injective maps} In  \cite{BOS} the authors introduced the so--called $\mu$--injective maps. Recall that the stochastic dynamical system $(\Gamma, \mu)$ on $C([0, 1])$ is said to be {\it $\mu$--injective} if for every $x\in[0,1]$,
$$
\int_{\Gamma} \# g^{-1}(x)\mu(\d g)\le 1.
$$

Lemma 4.5 in  \cite{BOS} shows that every $\mu$--injective system $(\Gamma, \mu)$ on $C([0, 1])$ satisfies condition \hyperref[BV]{(BV)}. If, additionally, the hypotheses of Corollary \ref{clry:main application} or \ref{cor1_16.09.68} hold, then the system has at most one $\mu$--invariant measure on $(0, 1)$ or is uniquely ergodic.

\subsection{Systems constructed by tableaux}

An important source of examples of random dynamical systems $(\Gamma,\mu)$ with unique ergodicity is provided by the following
\begin{prop}\label{prop: condition gamma-adapted partition}
	Let $\mathcal{I}=\{I_1,\dots, I_m \}$ be a finite covering of $[0,1]$ by closed intervals such that for every $g\in\Gamma$ and for every $I\in\mathcal{I}$:
	
	 {\rm (1)} $g$ is monotone on $I$,  
	 
	 {\rm (2)} there exists $J\in\mathcal{I}$ such that $g(I)\subset J$.\newline Then for any probability measure $\mu$ on $C([0,1])$ with $\supp(\mu)=\Gamma$, condition \hyperref[BV]{(BV)} is satisfied.
\end{prop}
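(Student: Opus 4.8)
The plan is to prove something considerably stronger than \hyperref[BV]{(BV)}: that the variation $\bigvee_0^1 g_1 g_2\cdots g_n$ is bounded by a constant depending only on $\mathcal I$, uniformly over all $n$ and over all sequences $(g_1,g_2,\dots)\in\Gamma^{\mathbb N}$. This makes the event appearing in \hyperref[BV]{(BV)} the whole space $\Gamma^{\mathbb N}$, which has full measure, and in particular positive measure.

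The heart of the argument is the following claim, to be proved by induction on $n$: for every $(g_1,\dots,g_n)\in\Gamma^n$ and every $I\in\mathcal I$, the composition $h_n:=g_1 g_2\cdots g_n$ is monotone on $I$ and $h_n(I)$ is contained in a single member of $\mathcal I$. The base case $n=1$ is exactly hypotheses (1) and (2). For the inductive step I write $h_n=h_{n-1}\circ g_n$: given $I\in\mathcal I$, hypothesis (2) supplies $J\in\mathcal I$ with $g_n(I)\subset J$, and (1) says $g_n$ is monotone on $I$; the inductive hypothesis says $h_{n-1}$ is monotone on $J$ and maps $J$ into a single interval of $\mathcal I$. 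Hence $h_n|_I=h_{n-1}\circ(g_n|_I)$ is a composition of monotone maps, so it is monotone on $I$, and $h_n(I)=h_{n-1}(g_n(I))\subset h_{n-1}(J)$ lies in a single interval. I expect this claim to be the only delicate point: condition (2) is precisely what is needed here, since it guarantees that $g_n(I)$ does not straddle a point at which $h_{n-1}$ passes from one piece of monotonicity to another; without it the composition could oscillate and the whole scheme collapses.

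Granting the claim, I would subdivide $[0,1]$ at the finite set of all endpoints of the intervals in $\mathcal I$, obtaining points $0=t_0<t_1<\cdots<t_p=1$. Since $\mathcal I$ covers $[0,1]$ and each open subinterval $(t_{i-1},t_i)$ contains no endpoint, an elementary verification shows that each $[t_{i-1},t_i]$ is contained in some $I\in\mathcal I$; on that subinterval $h_n$ is therefore monotone with values in $[0,1]$, so $\bigvee_{t_{i-1}}^{t_i} h_n\le 1$. Subadditivity of the variation then yields $\bigvee_0^1 h_n\le p$ for every $n$ and every sequence $(g_1,g_2,\dots)$.

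Finally, since $p$ depends only on $\mathcal I$ and not on $n$ or on the chosen maps, we obtain $\sup_{n\ge 1}\bigvee_0^1 g_1\cdots g_n\le p<\infty$ for \emph{every} $(g_1,g_2,\dots)\in\Gamma^{\mathbb N}$. Consequently the set in \hyperref[BV]{(BV)} equals all of $\Gamma^{\mathbb N}$, so its $\mu^{\otimes\mathbb N}$-measure is $1$, and \hyperref[BV]{(BV)} holds. Everything past the inductive claim is routine bookkeeping about variation and about the combinatorics of the subdivision.
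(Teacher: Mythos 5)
Your proof is correct and takes essentially the same route as the paper: both establish the stronger, purely deterministic statement that $\bigvee_0^1 g_1g_2\cdots g_n$ is bounded by a constant depending only on $\mathcal{I}$, uniformly over $n$ and over all sequences, so that the event in \hyperref[BV]{(BV)} is all of $\Gamma^{\mathbb N}$ and in particular has full measure. Where you run an induction showing that each composition is itself monotone on every $I\in\mathcal{I}$ with image contained in a member of $\mathcal{I}$, and then bound the variation by the number of subdivision intervals, the paper telescopes the variation directly via maps $\phi_g:\mathcal{I}\to\mathcal{I}$ satisfying $g(I)\subset\phi_g(I)$, reducing $\bigvee_I g_1\cdots g_n$ to $\bigvee_{\phi_\omega^n(I)} g_1\le\bigvee_0^1 g_1\le|\mathcal{I}|$ and arriving at the bound $|\mathcal{I}|^2$ --- an organizational difference only, not a substantive one.
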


\begin{proof} 
 Let $\omega = (g_1,g_2,\dots)\in\Gamma^\mathbb N$, and let $\mu\in\mathcal P$ be given. One has for any $g\in\Gamma$, according to the monotonicity of $g$ on any $I\in\mathcal{I}$:
\[
	\bigvee_0^1 g \leq \sum_{I\in\mathcal{I}} \bigvee_I g \leq \sum_{I\in\mathcal{I}} |g(b_I) - g(a_I)|\ \leq\ |\mathcal{I}|,
\]
where $I=[a_I,b_I]$ and $|\mathcal{I}|$ indicates the number of intervals in $\mathcal{I}$. So all $g\in \Gamma$ are of bounded variation.
The assumptions satisfied by any $g\in\Gamma$ imply further the existence of a map $\phi_g:\mathcal{I}\to\mathcal{I}$ such that $g(I)\subset\phi_g(I)$. Thus, for given $\omega$ and any $k\in \mathbb N$, $k\geq 2$, define $\phi_\omega^k:=\phi_{g_2}\circ\dots\circ\phi_{g_k}$. Then, because each $g$ is monotonic on the intervals from $\mathcal{I}$, for any $n\in\mathbb N$, $n\geq 2$,
	\begin{align*}
		V_n(\omega)\ &=\ \bigvee_0^1 g_1g_2\cdots g_n \ \leq \sum_{I\in\mathcal{I}} \bigvee_I g_1g_2\cdots g_n
		\ \leq \ \sum_{I\in\mathcal{I}} \bigvee_{\phi_{g_n}(I)} g_1g_2\cdots g_{n-1}\\
		&\leq \ \sum_{I\in\mathcal{I}} \bigvee_{\phi_\omega^n(I)} g_1\ \leq \ |\mathcal{I}|\cdot \max_{I\in\mathcal{I}} \bigvee_I g_1 \ \leq\ |\mathcal{I}|\,\bigvee_0^1 g_1\ \leq\ |\mathcal{I}|^2.
	\end{align*}
	Thus, $\sup_n V_n(\omega)<\infty$ for any $\omega$.
\end{proof}

\begin{remark}\label{rem: interval crossing cond}
The assumptions of Proposition~\ref{prop: condition gamma-adapted partition} are in particular fulfilled under the following condition, which is often easy to verify and thus provides a large class of examples which fulfil \hyperref[BV]{(BV)}:\\
  Assume there exists an essential partition   $\mathcal{I}=\{I_1,\dots, I_m \}$ of $[0,1]$ into intervals such that 
  \begin{enumerate}
		\item[(1)] every $g\in \Gamma$ is monotone on each $I\in\mathcal{I}$.	
		\item[(2)] crossing of the graph of a $g\in \Gamma$ from $I_j\times I_k$ to $I_{j'}\times I_{k+1}$ or $I_{j'}\times I_{k-1}$ (i.e. changing partition interval in vertical direction) can only happen at a corner point of $I_j\times I_k$.
	\end{enumerate} 
 Then, for any probability measure $\mu$ on $C([0,1])$ with $\supp(\mu)=\Gamma$, the assumptions of Proposotion~\ref{prop: condition gamma-adapted partition} are satisfied.
\end{remark}

By Corollary~\ref{clry:main application}, Proposition~\ref{prop: condition gamma-adapted partition} above implies the following statement about the quantity of $\mu$-invariant Borel probability measures on $(0,1)$:

\begin{cor}\label{cor: max one prob}
Let the assumptions of Proposition \ref{prop: condition gamma-adapted partition} be satisfied. If, moreover,  condition $(\mathfrak{U}_1)$ or $(\mathfrak{U}_2)$ holds then the stochastic dynamical system $(\Gamma,\mu)$ has at most one $\mu$-invariant Borel probability measure on $(0,1)$.
\end{cor}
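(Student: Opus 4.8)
The plan is to deduce this statement as a direct composition of two results already established, namely Proposition~\ref{prop: condition gamma-adapted partition} and Corollary~\ref{clry:main application}. First I would observe that the hypotheses of the present corollary include, verbatim, the hypotheses of Proposition~\ref{prop: condition gamma-adapted partition}: a finite covering $\mathcal{I}=\{I_1,\dots,I_m\}$ of $[0,1]$ by closed intervals such that every $g\in\Gamma$ is monotone on each $I\in\mathcal{I}$ and maps each such $I$ into some $J\in\mathcal{I}$. Since $(\Gamma,\mu)$ is a stochastic dynamical system, by definition $\Gamma=\supp(\mu)$, so the proviso ``$\supp(\mu)=\Gamma$'' required by Proposition~\ref{prop: condition gamma-adapted partition} holds automatically. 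Invoking that proposition then yields condition \hyperref[BV]{(BV)} for $(\Gamma,\mu)$.

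Next I would match the remaining hypothesis. Corollary~\ref{clry:main application} requires, in addition to \hyperref[BV]{(BV)}, that at least one of $(\mathfrak{U}_1)$ or $(\mathfrak{U}_2)$ hold; this is exactly the extra assumption stated in the present corollary. Hence all hypotheses of Corollary~\ref{clry:main application} are satisfied. Its first conclusion asserts precisely that, on $(0,1)$, there is at most one $\mu$-invariant Borel probability measure, which is the desired statement.

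I do not expect any genuine obstacle here: the entire mathematical content has already been carried out in the two cited results, and the proof is merely their concatenation. The only point deserving a moment of care is the bookkeeping check that no hypothesis is missing, in particular the identification $\Gamma=\supp(\mu)$ noted above, so that Proposition~\ref{prop: condition gamma-adapted partition} may be applied without qualification. Once \hyperref[BV]{(BV)} is in hand, the conclusion follows immediately from Corollary~\ref{clry:main application}.
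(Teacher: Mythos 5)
Your proposal is correct and coincides with the paper's own argument: the paper likewise obtains \hyperref[BV]{(BV)} from Proposition~\ref{prop: condition gamma-adapted partition} and then concludes at once via Corollary~\ref{clry:main application}, whose first conclusion is exactly the claimed statement. Your bookkeeping remark that $\Gamma=\supp(\mu)$ holds by the definition of a stochastic dynamical system is also the right (and only) point to check.
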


Let us now provide a concrete example that falls in the above class. 
Let us consider the discrete system $\Gamma=\{g_1, g_2, g_3\}$ defined by the drawing below. 
\begin{center}
\begin{tikzpicture}
\draw (0,0) rectangle (4,4);
\draw[densely dotted] (0,0) -- (4,4);
\draw[loosely dotted] (0,3) -- (4,3);
\draw[loosely dotted] (0,2) -- (4,2);
\draw[loosely dotted] (0,1) -- (4,1);
\draw[loosely dotted] (1,0) -- (1,4);
\draw[loosely dotted] (2,0) -- (2,4);
\draw[loosely dotted] (3,0) -- (3,4);
\draw[blue]  (0,0) to[out=30,in=190] (1,0.7) ;
\draw[blue]  (1,0.7) to[out=10,in=210] (2,1) node at (1.5,0.5) {$g_1$};
\draw[blue]  (2,1) to[out=-10,in=170] (3,0.4);
\draw[blue]  (3,0.4) to[out=-20,in=175] (4,0);
\draw[red]  (0,3) to[out=-55,in=160] (1,2) node at (0.5,2) {$g_2$};
\draw[red]  (1,2) to[out=10,in=240] (2,3);
\draw[red]  (2,3) to[out=35,in=190] (4, 4);
\draw[green]  (0,4) to[out=-45,in=160] (0.5,3.5);
\draw[green]  (0.5,3.5) to[out=-25,in=140] (1,3);
\draw[green]  (1,3) to[out=-45,in=160] (2,2);
\draw[green]  (2,2) to[out=-45,in=170] (3,1);
\draw[green]  (3,1) to[out=15,in=200] (4,1.5) node at (3,1.4) {$g_3$};

\draw (0,-0.05)--(0,0) node[anchor=north]{$0$};
\draw (-0.05,4)--(0.05,4) node[anchor=east]{$1$};
\draw (4,-0.05)--(4,0) node[anchor=north]{$1$};
\node at (2,-0.8) {\textbf{A family of functions satisfying Proposition~\ref{prop: condition gamma-adapted partition}}};
\end{tikzpicture}
\end{center}
 Let $\mu$ be any probability measure on $\Gamma$ with $\mu(\{g_i\})\neq 0$ for all $i=1,2,3$. The dotted lines in the drawing indicate a partition like in the condition of Remark~\ref{rem: interval crossing cond}. Thus the system satisfies \hyperref[BV]{(BV)}. Moreover, conditions $(\mathfrak{U}_1)$ 
 and $(\mathfrak{U}_2)$
 are satisfied ($g_1$ is below the diagonal and $g_2$ is above the diagonal). Finally, we have
 $g_i((0, 1))\subset (0, 1)$ and $g_2(0)\in (0, 1)$ and $g_3(1)\in (0, 1)$. Thus, by Corollary \ref{cor1_16.09.68}, the system $(\Gamma, \mu)$ is uniquely ergodic.

\subsection{\bf  Compensation of unbounded variation.}
Consider a sequence of closed intervals $I_n:=[a_n,b_n]$ with non-empty interior, contained in $[0,1]$, disjoint and such that $b_{n+1}<a_n$, while $a_n\downarrow 0$. Put $a_0:=1$, so $b_1<1$. Let $J_n:=[b_{n}, a_{n-1}]$ be the closure of the intermediate intervals. Thus, the intervals $I_n$, $J_m$ are essentially pairwise disjoint and
\[
[0,1] = \{0\}\ \cup\ \bigcup_{n=1}^\infty ( \ I_n\ \cup\ J_n).
\]
	
We shall define continuous maps $f_i:[0,1]\to[0,1]$, $i=1,2$, piecewise linear, such that $f_1(x)<x$ for all $0<x\leq 1$ and $f_2(x)>x$ for all $0\leq x < 1$. Moreover, the construction will be so that $f_1(I_n)= I_{n+1}$ and $f_1(J_n)= J_{n+1}$, while $\bigvee_{a_n}^{b_n} f_1 \uparrow \infty$ as $n\to \infty$. $f_2$ will be monotone on $[0,1]$.

Let $(m_n)$ be a strictly increasing sequence satisfying
\begin{equation}\label{e1_2.09.24}
(2m_n -1) |I_{n+1}|\to+\infty\qquad\text{as $n\to+\infty$.}
\end{equation}
Put $\Delta_n:=|I_n|/(2m_n-1)$  for $n\geq 1$. We define $f_1$ on $I_n$ as the piecewise linear continuous function that is linear on each subinterval $[a_n+ k\Delta_n, a_n+(k+1)\Delta_n]$, ($k=0,1,\dots,2m_n-1$), and
\begin{equation}
	f_1(a_n +k\Delta_n) := \begin{cases}
		a_{n+1} & \quad \mbox{for}\ k\ \mbox{even},\  0\leq k\leq 2m_n-2
  ,\\
		b_{n+1} & \quad \mbox{for}\ k\ \mbox{odd},\   1\leq k\leq 2m_n-1,

	\end{cases}
\end{equation}
  for $n\geq 1$ and $f_1(a_0):=a_1$.
Thus, $f_1$ on $I_n$ has $m_n$ maximal values $b_{n+1}$ and $m_n$ minimal values $a_{n+1}$ and `oscillates linearly' between these. Note that $f_1(a_n)=a_{n+1}$ and $f_1(b_n) = b_{n+1}$. 
\begin{center}
\begin{tikzpicture}[scale=0.50]
\draw (0,0) rectangle (12,12);
\draw (0,0) -- (12,12);
\draw (12,9) -- (10,8); 
\draw (9,7) -- (10,8); 
\draw (9,7) -- (7.66666, 5); 
\draw (7.66666,5) -- (7.333333,6); 
\draw (7.333333,6) -- (7,5); 
\draw (7,5) -- (6, 4); 
\draw (6,4) -- (5.8, 3); 
\draw (5.8,3) -- (5.6, 4); 
\draw (5.6,4) -- (5.4, 3); 
\draw (5.4, 3) -- (5.25, 4); 
\draw (5.25, 4) -- (5, 3); 
\draw (5,3) -- (4, 2.5);
\draw (4, 2.5) -- (3.8571428,2);
\draw (3.8571428, 2) -- (3.7142857, 2.5);
\draw (3.7142857, 2.5) -- (3.5714285, 2);
\draw (3.5714285, 2) -- (3.428571, 2.5);
\draw (3.428571, 2.5) -- (3.285714, 2);
\draw (3.285714, 2) -- (3.142857, 2.5);
\draw (3.142857, 2.5) -- (3,2);
\draw (3,2) -- (2.5, 1.75);
\draw (2.5, 1.75) -- (2.444444, 1.5);
\draw (2.444444,1.5) -- (2.3888888, 1.75);
\draw (2.3888888,1.75) -- (2.333333, 1.5);
\draw (2.3333333,1.5) -- (2.277777, 1.75);
\draw (2.277777,1.75) -- (2.222222, 1.5);
\draw (2.222222, 1.5) -- (2.1666666,1.75);
\draw (2.1666666,1.75) -- (2.11111, 1.5);
\draw (2.11111, 1.5) -- (2.05555, 1.75);
\draw (2.05555, 1.75) -- (2,1.5);
\draw (2,1.5)-- (1.75,1.25);
\draw (1.75, 1.25) -- (1.7272727272,1);
\draw (1.7272727272,1) -- (1.7045454545,1.25);
\draw (1.7045454545, 1.25) -- (1.6818181818, 1);
\draw(1.6818181818, 1) -- (1.659090909,1.25);
\draw (1.659090909,1.25) -- (1.636363636,1);
\draw (1.636363636,1) -- (1.613636363636,1.25);
\draw (1.613636363636, 1.25) -- (1.5909090909,1);
\draw (1.5909090909, 1) -- (1.5681818181, 1.25);
\draw (1.5681818181, 1.25) -- (1.54545454545,1);
\draw (1.54545454545, 1) -- (1.52272727,1.25);
\draw (1.52272727,1.25) -- (1.5,1);
\draw[densely dotted] (1.5,1) -- (1.2,0.8); 
\draw[loosely dotted] (0,10) -- (12,10);
\draw[loosely dotted] (0,9) -- (12,9);
\draw[loosely dotted] (0,8) -- (12,8);
\draw[loosely dotted] (0,7) -- (12,7);
\draw[loosely dotted] (0,6) -- (12,6);
\draw[loosely dotted] (0,5) -- (12,5);
\draw[loosely dotted] (0,4) -- (12,4);
\draw[loosely dotted] (0,3) -- (12,3);
\draw[loosely dotted] (0,2.5) -- (12,2.5);
\draw[loosely dotted] (0,2) -- (12,2);
\draw[loosely dotted] (0,1.75) -- (12,1.75);
\draw[loosely dotted] (0,1.5) -- (12,1.5);
\draw[loosely dotted] (0,1.25) -- (12,1.25);
\draw[loosely dotted] (0,1) -- (12,1);
\draw (-0.05,12)--(0.05,12) node[anchor=east]{$a_0=1$};
\draw (-0.05,10)--(0.05,10) node[anchor=east]{$b_1$};
\draw (-0.05,9)--(0.05,9) node[anchor=east]{$a_1$};
\draw (-0.05,8)--(0.05,8) node[anchor=east]{$b_2$};
\draw (-0.05,7)--(0.05,7) node[anchor=east]{$a_2$};
\draw (-0.05,6)--(0.05,6) node[anchor=east]{$b_3$};
\draw (-0.05,5)--(0.05,5) node[anchor=east]{$a_3$};
\draw (-0.05,4)--(0.05,4) node[anchor=east]{$b_4$};
\draw (-0.05,3)--(0.05,3) node[anchor=east]{$a_4$};
\draw (-0.05,2.5)--(0.05,2.5) node[anchor=east]{$b_5$};
\draw (-0.05,2)--(0.05,2) node[anchor=east]{$a_5$};
\draw (-0.05,1.5)--(0.05,1.5) node[anchor=east]{$a_6$};
\draw (-0.05,1)--(0.05,1) node[anchor=east]{$a_7$};
\draw (-0.05,0)--(0.05,0) node[anchor=east]{$0$};
\draw (12,0)--(12.05,0) node[anchor=west]{$1$};
\node at (6,11.5) {\textbf{$f_1$ for choosing $m_n=n$.}};
\end{tikzpicture}
\end{center}


Moreover, 
\begin{equation}
\esssup_{x\in I_n}  |f_1'(x)| = \frac{b_{n+1}-a_{n+1}}{\Delta_n} = \frac{|I_{n+1}|}{|I_n|} (2m_n-1).
\end{equation}
Further, $f_1(0):=0$ and, finally, on $J_n$ $f_1$ is the linear increasing function such that $f_1(b_n)=b_{n+1}$ and $f_1(a_{n-1})=a_n$. In particular, $f_1(1) = f_1(a_0) = a_1<1$. Then we have
\begin{equation}
\esssup_{x\in J_n}  |f_1'(x)| = \frac{|J_{n+1}|}{|J_n|}.
\end{equation}
Note that by construction $f_1(x)<x$ for all $0<x\leq 1$.
Since
\[
\bigvee_0^1 f_1 \ge \bigvee_{a_n}^{b_n}f_1=(2m_n-1) |I_{n+ 1}|\to\infty\qquad\text{as $n\to+\infty$},
\]
by (\ref{e1_2.09.24}), 
we obtain $\bigvee_0^1 f_1=+\infty$.

We now define $f_2$ on the intervals $I_n$ and $J_n$ as piecewise linear with slope $0<S^I_n<1$ and $0<S^J_n<1$ respectively -- to be chosen later -- and $f_2(1)=1$. Thus, $f_2$ is monotone increasing on $(0,1]$ and $f_2(x)>x$ for $0<x<1$. Hence, $\lim_{x\downarrow 0} f_2(x)$ exists. Extend $f_2$ continuously to $0$. Then necessarily, $f_2(0)>0$.

Since $f_2$ has only countably many points where it is not differentiable and $0$ is the only accumulation point, Remark \ref{r1_09.09.24} gives
\begin{equation}\label{eq:f2 is non-expansive}
	|f_2|_L = \esssup_{x\in [0, 1]}|f_2'(x)|= \max\bigl( \sup_{n\geq 1} S^I_n, \sup_{n\geq 1} S^J_n\bigl) \leq 1,
\end{equation} where $|f_2|_L$ denotes the Lipschitz constant of $f_2$. In particular, we see that $\bigvee_0^1 f_2\leq 1$.

For any $m\in\mathbb N$ one has
\begin{equation}\label{eq:Lipschits constant f2 before f1-s}
	\bigl| f_2 f_1^m\bigr|_{L}= \esssup_{x\in [0, 1]}\, \bigl| f_2'\bigl(f_1^m(x)\bigr) \cdot f_1'\bigl(f_1^{m-1}(x)\bigr)\cdots f_1'(x) \bigr|,
\end{equation}
by the fact that $f_2 f_1^m$ satisfies the hypotheses of Remark \ref{r1_09.09.24}. 

We shall estimate the essential supremum in \eqref{eq:Lipschits constant f2 before f1-s} by considering the essential suprema on the intervals $I_n$ and $J_n$ separately.
Note that $f_1(I_n)=I_{n+1}$ for $n\geq 1$.Thus,
\begin{align*}
	E^{I,m}_n \ &:=\ \esssup_{x\in I_n}\, \bigl| f_2'\bigl(f_1^m(x)\bigr) \cdot f_1'\bigl(f_1^{m-1}(x)\bigr)\cdots f_1'(x) \bigr| \\
	&\leq\ \esssup_{x\in I_{n+m}}\, \bigl| f'_2(x)\bigr| \cdot \prod_{k=0}^{m-1} \esssup_{y\in I_{n+k}}\, \bigl| f'_1(y)\bigr|\\
	&=\ S^I_{n+m} \cdot \prod_{k=0}^{m-1} \frac{|I_{n+k+1}|}{|I_{n+k}|}(2m_{n+k}-1)\ =\ S^I_{n+m} \frac{|I_{n+m}|}{|I_{n}|} \prod_{k=0}^{m-1}(2m_{n+k}-1).
\end{align*}
Similarly,
\begin{align*}
	E^{J,m}_n \ &:=\ \esssup_{x\in J_n}\, \bigl| f_2'\bigl(f_1^m(x)\bigr) \cdot f_1'\bigl(f_1^{m-1}(x)\bigr)\cdots f_1'(x) \bigr| \\
	&\leq\ \esssup_{x\in J_{n+m}}\, \bigl| f'_2(x)\bigr| \cdot \prod_{k=0}^{m-1} \sup_{y\in J_{n+k}}\, \bigl| f'_1(y)\bigr|
	\ =\ S^J_{n+m} \cdot \prod_{k=0}^{m-1} \frac{|J_{n+k+1}|}{|J_{n+k}|}\\
	&=\ S^J_{n+m} \frac{|J_{n+m}|}{|J_{n}|}.
\end{align*}

We may choose the intervals such that $|J_{n+1}|/|J_n|\leq \theta<1$ for all $n$. One has $|I_{n+1}|/|I_n|<1$ by construction. So, if we take 
\begin{equation}
	S^I_\ell\ \leq\ \prod_{k=1}^{\ell-1} (2m_k-1)^{-1},
\end{equation}
then $E^{I,m}_n\leq 1$ for all $n,m\in\mathbb N$. Moreover, since we are taking $S^J_\ell<1$ for all $\ell$ to assure that $f_2(x)>x$, we find that $E^{J,m}_n<\theta^m<1$ for all $n,m\in\mathbb N$ too. Thus we obtain:
\begin{lem}\label{lem:properties f1 f2}
	With the choices for $[a_n,b_n]$ and $S_n^I$ and $S_n^J$ as described above the constructed functions $f_1$ and $f_2$ are continuous on $[0,1]$ and such that: 
	\begin{enumerate}
		\item[({\it i})] $f_1$ has unbounded variation on $[0,1]$.
		\item[({\it ii})] $f_1(x)<x$ on $(0,1]$.
		\item[({\it iii})] $f_2(x)>x$ on $[0,1)$.
		\item[({\it iv})] For any $m\in \mathbb N$, $f_2f_1^m$ is Lipschitz on $[0,1]$ and $|f_2f_1^m|_L\leq 1$.
	\end{enumerate} 
\end{lem}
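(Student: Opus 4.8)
The statement is a bookkeeping lemma: each of the four properties has already been prepared in the construction that precedes it, so the plan is to assemble those observations and supply the two (easy) continuity verifications. I would dispatch continuity and items (\textit{i})--(\textit{iii}) quickly and reserve the effort for (\textit{iv}), where the telescoping estimate and the choice of slopes do the genuine work.

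First I would check continuity. On each $I_n$ and each $J_n$ both $f_1$ and $f_2$ are piecewise linear, hence continuous, and their defining values agree at the shared endpoints $a_n,b_n$ (for $f_1$ the two adjacent pieces both give $f_1(a_n)=a_{n+1}$ and $f_1(b_n)=b_{n+1}$), so both maps are continuous on $(0,1]$. At the origin: if $x\in I_n\cup J_n$ then $f_1(x)\in I_{n+1}\cup J_{n+1}\subseteq[a_{n+1},a_n]$, and since $a_n\downarrow 0$ this forces $f_1(x)\to 0=f_1(0)$; for $f_2$, monotonicity and boundedness give a one-sided limit at $0$, which is precisely the value used to extend $f_2$ continuously there.

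Items (\textit{i})--(\textit{iii}) are then read off the construction. For (\textit{i}) I would invoke the displayed lower bound $\bigvee_0^1 f_1\ge\bigvee_{a_n}^{b_n}f_1=(2m_n-1)|I_{n+1}|$ together with the growth hypothesis \eqref{e1_2.09.24}, giving $\bigvee_0^1 f_1=+\infty$. Property (\textit{ii}) holds because $f_1$ was built strictly below the diagonal on $(0,1]$, and (\textit{iii}) because $f_2$ was taken strictly increasing with all slopes $<1$ and $f_2(1)=1$, which forces $f_2(x)>x$ on $[0,1)$ (including $x=0$, where the continuous extension satisfies $f_2(0)>0$).

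The substance is (\textit{iv}), and this is where I expect the main obstacle. The structural fact that makes everything work is that $f_1$ shifts the interval index up by one, $f_1(I_n)=I_{n+1}$ and $f_1(J_n)=J_{n+1}$, so $f_1^m$ carries $I_n$ onto $I_{n+m}$ and $J_n$ onto $J_{n+m}$ and the chain rule along the orbit telescopes. Using Remark~\ref{r1_09.09.24}, the Lipschitz constant of $f_2f_1^m$ equals $\esssup|(f_2f_1^m)'|$, which I would split as $\max\bigl(\sup_n E^{I,m}_n,\ \sup_n E^{J,m}_n\bigr)$; here the only logical subtlety is that the hypotheses of that Remark require a bounded derivative, so the $E$-estimates must be established first (their countable, single-accumulation-point kink structure comes from the piecewise-linear build). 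The telescoping computations already carried out yield $E^{I,m}_n\le S^I_{n+m}\tfrac{|I_{n+m}|}{|I_n|}\prod_{k=0}^{m-1}(2m_{n+k}-1)$ and $E^{J,m}_n\le S^J_{n+m}\tfrac{|J_{n+m}|}{|J_n|}$. The hard point — and the reason the slopes were chosen as $S^I_\ell\le\prod_{k=1}^{\ell-1}(2m_k-1)^{-1}$ — is to annihilate the growing factor $\prod_{k=0}^{m-1}(2m_{n+k}-1)$ produced by the unbounded oscillation of $f_1$: with this choice the product collapses to $\prod_{k=1}^{n-1}(2m_k-1)^{-1}\le 1$, and together with $|I_{n+m}|/|I_n|<1$ it gives $E^{I,m}_n\le 1$, while $|J_{n+1}|/|J_n|\le\theta<1$ and $S^J_\ell<1$ give $E^{J,m}_n<\theta^m<1$. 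Taking the maximum yields $|f_2f_1^m|_L\le 1$, and the finiteness of this bound is exactly what retroactively legitimises the appeal to Remark~\ref{r1_09.09.24}, completing the verification of all four properties.
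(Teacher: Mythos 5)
Your proposal is correct and takes essentially the same route as the paper: there the lemma is stated as a summary of the preceding construction (``Thus we obtain''), and your assembly of exactly those facts --- the variation lower bound via \eqref{e1_2.09.24} for ({\it i}), the below/above-diagonal structure for ({\it ii})--({\it iii}), and the telescoping estimates $E^{I,m}_n$, $E^{J,m}_n$ together with the slope choices $S^I_\ell\le\prod_{k=1}^{\ell-1}(2m_k-1)^{-1}$ and $|J_{n+1}|/|J_n|\le\theta<1$ for ({\it iv}) --- is precisely the paper's argument. Your additional care (continuity at $0$, and establishing finiteness of the essential supremum before invoking Remark~\ref{r1_09.09.24}) only makes explicit what the paper leaves implicit.
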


Property ({\it iv}), combined with the non-expansiveness of $f_2$ (see \eqref{eq:f2 is non-expansive}), yields that 
\begin{cor}
	For any infinite sequence $\omega=(f_2, f_{i_1},f_{i_2},\dots)$ of $f_i$ ($i=1,2$) that starts with $f_2$, one has for all $n\in\mathbb N$,
	\[
		V_n(\omega) := \bigvee_0^1 f_2 f_{i_1}\cdots f_{i_{n-1}} \leq 1,
	\] for $i_j\in\{1,2\}$.
\end{cor}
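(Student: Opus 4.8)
The plan is to use the hypothesis that the outermost map is $f_2$ in order to cut the finite composition $f_2 f_{i_1}\cdots f_{i_{n-1}}$ into ``blocks'' of the shape $f_2 f_1^{m}$, each of which is already controlled by property (\textit{iv}) of Lemma~\ref{lem:properties f1 f2}. Reading the string from the outside (left), every occurrence of $f_2$ opens a new block that then absorbs the run of consecutive $f_1$'s preceding the next $f_2$; since $\omega$ begins with $f_2$, this groups \emph{every} symbol and yields
\[
	f_2 f_{i_1}\cdots f_{i_{n-1}} = B_1 B_2\cdots B_k, \qquad B_j = f_2 f_1^{m_j},\ \ m_j\ge 0.
\]
Here the trailing block $B_k$ may end in a nonempty run of $f_1$'s (if $f_{i_{n-1}}=f_1$) or be a bare $f_2$; both are admissible.

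The key step is that each block is $1$-Lipschitz. For $m_j\ge 1$ this is precisely property (\textit{iv}), namely $|f_2 f_1^{m_j}|_L\le 1$; for $m_j=0$ (two consecutive $f_2$'s, including the case $n=1$ where the composition is just $f_2$) the block equals $f_2$, which is non-expansive by \eqref{eq:f2 is non-expansive}. This degenerate case is the only place the separate non-expansiveness of $f_2$ enters, so I would record it explicitly. Since the composition of $1$-Lipschitz maps $[0,1]\to[0,1]$ is again $1$-Lipschitz, I conclude $|f_2 f_{i_1}\cdots f_{i_{n-1}}|_L = |B_1\cdots B_k|_L\le 1$.

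Finally, variation is dominated by the Lipschitz constant on $[0,1]$: for any $1$-Lipschitz $h\colon[0,1]\to[0,1]$ one has $\bigvee_0^1 h\le |h|_L\cdot(1-0)\le 1$ directly from the definition, giving $V_n(\omega)\le 1$. Equivalently, one may avoid the ``composition of Lipschitz is Lipschitz'' remark and instead peel the blocks off one at a time using Remark~\ref{r2_09.09.24}: starting from $\bigvee_0^1 B_1(B_2\cdots B_k)\le |B_1|_L \bigvee_0^1 (B_2\cdots B_k)$ and iterating down to $\bigvee_0^1 B_k\le |B_k|_L\le 1$, with $|B_j|_L\le 1$ applied at each stage.

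The computations are all elementary; the only point needing care is the bookkeeping of the block decomposition, and in particular isolating the degenerate block $m_j=0$. The hypothesis that $\omega$ starts with $f_2$ is essential exactly here: if the string began with an $f_1$, the first block would be a bare power $f_1^{m}$, whose Lipschitz constant is unbounded, and the whole estimate would break down.
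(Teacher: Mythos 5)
Your proposal is correct, and while it rests on exactly the same two ingredients as the paper --- property (\textit{iv}) of Lemma~\ref{lem:properties f1 f2} for blocks $f_2f_1^{m}$ with $m\geq 1$, and the non-expansiveness of $f_2$ from \eqref{eq:f2 is non-expansive} for bare $f_2$ blocks --- the way you assemble them is genuinely different from the paper's. The paper proceeds by strong induction on $n$: it locates the leading maximal run of $f_1$'s (or of $f_2$'s), peels off the corresponding factor via Remark~\ref{r2_09.09.24}, i.e. $\bigvee_0^1 fg \leq |f|_L \bigvee_0^1 g$, and applies the induction hypothesis to the shifted sequence, which again starts with $f_2$; this is your block decomposition processed one block at a time, with shift-operator bookkeeping and a case analysis ($n_0=1$ versus $n_0>1$, all-ones versus all-twos). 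You instead carry out the factorization $B_1\cdots B_k$, $B_j = f_2 f_1^{m_j}$, globally and use submultiplicativity of Lipschitz constants, which proves the strictly stronger statement that the entire composition $f_2f_{i_1}\cdots f_{i_{n-1}}$ is $1$-Lipschitz; the variation bound then drops out from $\bigvee_0^1 h \leq |h|_L$. What your route buys is the elimination of the induction and an explicit uniform Lipschitz bound on all such compositions, whereas the paper's induction only ever tracks the variation; what the paper's route buys is that it never needs the "composition of Lipschitz maps is Lipschitz" step, staying entirely within variation estimates of the form of Remark~\ref{r2_09.09.24}. Your closing observation about why the leading $f_2$ is essential is also accurate: $f_1$ has unbounded variation by Lemma~\ref{lem:properties f1 f2}~(\textit{i}), hence is not Lipschitz, so a leading bare block $f_1^{m}$ admits no such control.
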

\begin{proof}
	We prove the above corollary by induction on $n$. For $n=1$ the result is fine (Equation \eqref{eq:f2 is non-expansive}). For $n=2$, if $i_1=2$ then $V_1 \bigvee_0^1 f_2 f_2\leq 1$, by the fact that $f_2$, and also $f_2 f_2$, is a monotonic function. If $i_1=1$, then the result is simply
	Lemma \ref{lem:properties f1 f2} ({\it iv}). Suppose the estimate has been proven for all $n$ up to $n=N$   for all $\omega\in\{f_2\}\times \{f_1,f_2\}^{\mathbb{N}}$. Let $\omega=(f_2,f_{i_1},\ldots )\in\{f_2\}\times \{f_1,f_2\}^{\mathbb{N}}$ be arbitrary given. If the $i_1,\dots ,
 i_{N+1}
 $ are either all 1 or all are 2, then the result follows from either Lemma \ref{lem:properties f1 f2} ({\it iv}) or non-expansiveness of $f_2$, respectively. So, let $n_0$ be the smallest integer such that $i_{n_0}=1$. Let $m$ be the largest non-negative integer such that $i_{n}=1$ for all $n_0\leq n\leq n_0+m$.  Let $T: \{f_1,f_2\}^{\mathbb{N}}\longrightarrow \{f_1,f_2\}^{\mathbb{N}}$ denote the shift $T(f_{i_1},f_{i_2},\ldots):= (f_{i_2},\ldots)$. If $n_0=1$, then  $T^{m+2}\omega$ starts with $f_2$. Thus, we obtain (see Remark \ref{r2_09.09.24}) that
	\[
		V_{N+1}(\omega) \leq |f_2f_1^{m+1}|_L\cdot V_{N-m-1}(T^{m+2}\omega) \leq 1,
	\]
	according to Lemma \ref{lem:properties f1 f2} ({\it iv}) and the induction hypothesis. If $n_0>1$, then we analogously obtain
	\[
		V_{N+1}(\omega) \leq |f_2^{n_0-1}|_L\cdot V_{N-n_0+2}(T^{n_0-1}\omega) \leq |f_2|_L^{n_0-1} \cdot V_{N-n_0+2}(T^{n_0-1}\omega).
	\] 
	Now, $\omega':=T^{n_0-1}\omega$ is an infinite sequence that starts with $f_2$. Hence, $V_{N-n_0+2}(\omega')\leq 1$ by the induction hypothesis. The estimate follows.
\end{proof}

Since the system $(\Gamma, \mu)$ for $\Gamma=\{f_1, f_2\}$ and $\mu(f_1)=p$, $\mu(f_2)=1-p$ for some $p\in (0, 1)$ satisfies condition (BV),  $0, 1\in\supp\nu$ for every $\mu$--invariant measure $\nu$, $f_i((0, 1))\subset (0, 1)$,  $f_1(1)\in (0, 1)$ and $f_2 (0)\in (0, 1)$, Corollary \ref{cor1_16.09.68} implies the following:
\begin{prop} The system $(\Gamma, \mu)$
for arbitrary $p\in (0, 1)$ admits a unique $\mu$--invariant ergodic measure on $(0, 1)$.
\end{prop}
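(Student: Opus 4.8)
The plan is to verify the hypotheses of Corollary~\ref{cor1_16.09.68} for the two-point system $\Gamma=\{f_1,f_2\}$ and then invoke that corollary directly. Corollary~\ref{cor1_16.09.68} requires four things: (a) condition \hyperref[BV]{(BV)}; (b) both $(\mathfrak{U}_1)$ and $(\mathfrak{U}_2)$; (c) $g((0,1))\subset(0,1)$ for every $g\in\Gamma$; and (d) the existence of $g_0,g_1\in\Gamma$ with $g_0(0)\in(0,1)$ and $g_1(1)\in(0,1)$. Most of these have essentially already been established in the construction, so the proof amounts to collecting the pieces and citing the relevant earlier results.

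First I would fix $\mu$ by $\mu(\{f_1\})=p$, $\mu(\{f_2\})=1-p$ with $p\in(0,1)$, so that indeed $\supp(\mu)=\Gamma=\{f_1,f_2\}$. For (a), I would argue that \hyperref[BV]{(BV)} holds: by the Corollary preceding Lemma~\ref{lem:properties f1 f2}, every infinite sequence $\omega\in\{f_2\}\times\{f_1,f_2\}^{\mathbb N}$ starting with $f_2$ satisfies $\sup_n V_n(\omega)\le 1$. The set of such sequences has $\mu^{\otimes\mathbb N}$-measure $1-p>0$, so the event in \hyperref[BV]{(BV)} has positive probability. For (b), Lemma~\ref{lem:properties f1 f2}(ii) gives $f_1(x)<x$ on $(0,1]$, which yields $(\mathfrak{U}_1)$, and Lemma~\ref{lem:properties f1 f2}(iii) gives $f_2(x)>x$ on $[0,1)$, which yields $(\mathfrak{U}_2)$; in each case the witnessing $g$ is a single fixed map in $\Gamma$, so the conditions hold at every interior point.

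For (c) and (d) I would read off the boundary behaviour from the explicit construction. By construction $f_1(0)=0$ and $f_1(1)=f_1(a_0)=a_1\in(0,1)$; monotonicity of $f_1$ on each piece together with $f_1(I_n)=I_{n+1}$, $f_1(J_n)=J_{n+1}$ shows $f_1$ maps the interior into the interior, so $f_1((0,1))\subset(0,1)$. For $f_2$, it is monotone increasing on $(0,1]$ with $f_2(1)=1$ and $f_2(0)>0$ (the continuous extension), with $f_2(x)>x$ on $[0,1)$, so $f_2((0,1))\subset(0,1)$ as well; this establishes (c). For (d), take $g_1:=f_1$, since $f_1(1)=a_1\in(0,1)$, and take $g_0:=f_2$, since $f_2(0)>0$ and $f_2(0)<f_2(1)=1$ gives $f_2(0)\in(0,1)$.

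Having verified all four hypotheses, Corollary~\ref{cor1_16.09.68} immediately yields that $(\Gamma,\mu)$ is uniquely ergodic, i.e.\ there is a unique $\mu$-invariant Borel probability measure, and in particular a unique $\mu$-invariant ergodic measure on $(0,1)$, which is the claimed conclusion. I do not anticipate a genuine obstacle here, since this proposition is designed as a showcase application of the machinery; the only mildly delicate point is confirming the boundary values $f_1(0)=0$, $f_1(1)=a_1$ and the continuous extension $f_2(0)\in(0,1)$ directly from the piecewise-linear definitions, together with checking that interior points cannot be mapped to $\{0,1\}$. These are routine consequences of the monotonicity of each affine piece and the nesting $f_1(I_n)=I_{n+1}$, $f_1(J_n)=J_{n+1}$, and I would state them briefly rather than re-deriving the construction.
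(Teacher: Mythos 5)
Your proposal is correct and follows essentially the same route as the paper: the paper's own justification is precisely the observation that \hyperref[BV]{(BV)} holds (via the corollary on sequences starting with $f_2$, which occur with probability $1-p>0$), that $(\mathfrak{U}_1)$ and $(\mathfrak{U}_2)$ follow from Lemma~\ref{lem:properties f1 f2}({\it ii}),({\it iii}), that $f_i((0,1))\subset(0,1)$, and that $f_1(1)=a_1\in(0,1)$ and $f_2(0)\in(0,1)$, after which Corollary~\ref{cor1_16.09.68} is invoked. You have simply spelled out these verifications in more detail than the paper does.
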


\end{document}